\newtheorem{theorem}{Theorem}
\newtheorem{lemma}[theorem]{Lemma}
\newtheorem{corollary}[theorem]{Corollary}
\newtheorem{proposition}[theorem]{Proposition}
\newtheorem{conj}[theorem]{Conjecture}
\newtheorem{example}[theorem]{Example}
\newcommand\commentout[1]{}
\newcommand{\vol}{\operatorname{vol}}
\newcommand{\aff}{\operatorname{aff}}
\newcommand{\I}{\mathcal{I}}
\renewcommand{\P}{\mathcal{P}}
\newcommand{\NN}{\mathbb{N}}
\newcommand{\ZZ}{\mathbb{Z}}
\newcommand{\RR}{\mathbb{R}}
\renewcommand\emptyset{\varnothing}
\begin{document}

\title{The combinatorics of interval-vector polytopes}  

\author[Beck]{Matthias Beck}
\address{Department of Mathematics, San Francisco State University, San Francisco, CA 94132, USA}
\email{mattbeck@sfsu.edu}

\author[De Silva]{Jessica De Silva}
\address{Department of Mathematics, University of Nebraska, Lincoln, NE 68588, USA}
\email{jessica.desilva@huskers.unl.edu}

\author[Dorfsman--Hopkins]{Gabriel Dorfsman--Hopkins}
\address{Department of Mathematics, Dartmouth College, Hanover, NH 03755, USA}
\email{Gabriel.D.Dorfsman-Hopkins.13@dartmouth.edu}

\author[Pruitt]{Joseph Pruitt}
\address{Department of Mathematics, University of Illinois, Urbana--Champaign, IL 61801, USA}
\email{j92pruitt@gmail.com}

\author[Ruiz]{Amanda Ruiz}
\address{Department of Mathematics, Harvey Mudd College, Claremont, CA 91711, USA}
\email{amruiz@hmc.edu}

\keywords{Interval vector, lattice polytope, Ehrhart polynomial, root polytope, Catalan number, $f$-vector.}

\subjclass[2000]{Primary 52B05; Secondary 05A15, 52B20.}

\date{11 August 2013}

\thanks{We thank the creators and maintainers of the software packages {\tt
polymake} \cite{polymake} and {\tt LattE} \cite{latte,koeppelatte}, which were
indispensable for our project.
We also thank an anonymous referee for helpful suggestions, and Ricardo Cortez and the staff at MSRI for creating an ideal research environment at MSRI-UP. 
This research was partially supported by the NSF through the grants DMS-1162638 (Beck) and DMS-1156499 (MSRI-UP REU), and by the NSA through grant H98230-11-1-0213.}

\maketitle

\begin{abstract}
An \emph{interval vector} is a $(0,1)$-vector in $\mathbb{R}^n$ for which all the $1$'s appear consecutively, and an \emph{interval-vector polytope} is the convex hull of a set of
interval vectors in $\mathbb{R}^n$.
We study three particular classes of interval vector polytopes which exhibit interesting
geometric-combinatorial structures; e.g., one class has volumes equal to the Catalan
numbers, whereas another class has face numbers given by the Pascal 3-triangle.
\end{abstract}


\section{Introduction}

An \emph{interval vector} is a $(0,1)$-vector $x\in\mathbb{R}^n$ such
that, if $x_i = x_k = 1$ for $i<k$, then $x_j = 1$ for every $i\le j\le k$. 
In \cite{MR2825295} Dahl introduced the class of \emph{interval-vector polytopes}, 
 which are formed by taking the convex hull of a set of interval vectors in
$\mathbb{R}^{n}$. Our goal is to derive combinatorial properties of certain interval-vector polytopes.

For $i\le j$, let
$\alpha_{i,j}:=e_i + e_{i+1} + \dots + e_j$, where $e_i$ is the
$i^{\textit{th}}$ standard unit vector.  The \emph{interval length} of
$\alpha_{ij}$ is $j-i+1$. 
 Let $S\subset \NN$. For a fixed $n$, let $\I_S$ be the set of interval vectors in $\RR^n$
with interval length in $S$. (If $S$ is small, we may leave out the brackets in
the set notation; e.g., we will denote $\I_{\{i,j\}}$ by $\I_{i,j}$.) We will
denote the set of all non-zero interval vectors in a given dimension as $\I_{[n]}$.
Let $\P_n(\I_S)$ be the convex hull of $\I_S\subset \RR^n$.
 
There are three classes of interval vector polytopes that we will consider in this
paper. In Section~\ref{complete} we study the \emph{complete interval vector polytope}
$\P_n(\I_{[n]})$, the convex hull of all interval vectors in $\RR^n$ except the zero vector. In Section \ref{fixed} we look at the \emph{fixed interval vector
polytope} $\P_n(\I_i)$ given by the convex hull of all interval vectors with
interval length $i$. In Section \ref{pyramid} we  introduce the first in a class
of \emph{pyramidal interval polytopes}: the \emph{first pyramidal interval vector polytope} $\P_n(\I_{1,n-1})$, the convex hull of all interval vectors in $\RR^n$ with interval length $1$ or $n-1$. 
(The reason for the term \emph{pyramidal interval polytope} will also become clear in Section \ref{pyramid}.)
In Section \ref{conjectures} we generalize this to the \emph{$i^\text{th}$ pyramidal
interval vector polytope} $\P_n(\I_{1,n-i})$.  We examine combinatorial characteristics of these polytopes such as the $f$-vector and volume and discover  unexpected relations to well-known numerical sequences. 

Let $t$ be a positive integer variable. For a lattice polytope $\P$ (i.e., the
vertices of $\P$ all have integer coordinates), the \emph{Ehrhart polynomial}
$L_{\mathcal{P}}(t)$ is the counting function 
yielding the number of lattice points in 
$t\mathcal{P}:=\{tv\;|\;v\in\mathcal{P}\}$.
Ehrhart \cite{ehrhartpolynomial} proved that $L_{\mathcal{P}}(t)$ is indeed a
polynomial; see, e.g., \cite{ccd} for more about Ehrhart polynomials.
The  Ehrhart polynomial  contains useful geometric information about a polytope;
in particular, the leading coefficient of the Ehrhart polynomial gives the volume of the polytope.

In \cite{MR2487491}, Postnikov defines the \emph{complete root polytope}
$Q_n\subset\mathbb{R}^n$ as the convex hull of $0$ and $e_i-e_j$ for all $i<j$ where $e_i$ is the $i^{\text{th}}$ standard unit vector. 
He showed (among many other things) that the volume of $Q_{n+1}$ is $C_n:= \frac{1}{n+1}{2n\choose n}$, the $n^{\textrm{th}}$ Catalan number.
In Section \ref{complete} we prove, in a discrete-geometric
sense, that $Q_{n+1}$ and the complete interval vector polytope $\P_n(\I_{[n]})$ are interchangeable, that is, the two polytopes
have the same Ehrhart polynomial.

\begin{theorem}\label{CompleteRootEquiv}
$L_{Q_{n+1}}(t) = L_{\P_n(\I_{[n]})}(t) \, .$
\end{theorem}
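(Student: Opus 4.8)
The plan is to exhibit an explicit lattice-preserving linear map that carries the root polytope $Q_{n+1}$ onto the complete interval-vector polytope $\P_n(\I_{[n]})$, and then to observe that such a map matches up the lattice points of every dilate. Define the \emph{partial-sum map} $T\colon\RR^{n+1}\to\RR^n$ by
\[
T(x)_k \ := \ x_1 + x_2 + \cdots + x_k \qquad (1\le k\le n).
\]
A direct computation on the generators of $Q_{n+1}$ gives $T(0)=0$ and
\[
T(e_i-e_j) \ = \ e_i + e_{i+1} + \cdots + e_{j-1} \ = \ \alpha_{i,\,j-1} \qquad (1\le i<j\le n+1),
\]
so as $(i,j)$ ranges over all pairs with $i<j$ the images $\alpha_{i,j-1}$ range over exactly the nonzero interval vectors in $\I_{[n]}$. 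Since $T$ is linear it takes convex hulls to convex hulls, and therefore $T(Q_{n+1})=\conv\bigl(\{0\}\cup\I_{[n]}\bigr)=\P_n(\I_{[n]})$.

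The crucial point is that $T$ is \emph{unimodular} on the relevant lattices. All vertices of $Q_{n+1}$ lie in the hyperplane $H=\{x\in\RR^{n+1}:x_1+\cdots+x_{n+1}=0\}$, so $Q_{n+1}$ and every dilate $tQ_{n+1}$ are contained in $H$; consequently the points of $\ZZ^{n+1}$ lying in $tQ_{n+1}$ are precisely the points of the root lattice $A_n:=\ZZ^{n+1}\cap H$ lying in $tQ_{n+1}$. I would then check that $T$ restricts to a bijection $A_n\to\ZZ^n$: the kernel of $T$ is the line $\RR e_{n+1}$, which meets $H$ only in $0$, so $T|_H$ is injective; and $T$ sends the basis $\{e_i-e_{i+1}:1\le i\le n\}$ of $A_n$ to the standard basis $\{e_1,\dots,e_n\}$ of $\ZZ^n$, so $T(A_n)=\ZZ^n$. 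Hence $T$ carries $A_n$ isomorphically onto $\ZZ^n$.

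Combining these facts finishes the argument. For each positive integer $t$ we have $T(tQ_{n+1})=t\,T(Q_{n+1})=t\,\P_n(\I_{[n]})$, and $T$ is a lattice isomorphism $A_n\to\ZZ^n$, so $T$ restricts to a bijection between $tQ_{n+1}\cap\ZZ^{n+1}=tQ_{n+1}\cap A_n$ and $t\,\P_n(\I_{[n]})\cap\ZZ^n$. Counting lattice points on both sides yields $L_{Q_{n+1}}(t)=L_{\P_n(\I_{[n]})}(t)$ for every $t$, which is the claimed identity of Ehrhart polynomials.

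I expect the only real subtlety—and the step I would be most careful about—to be the unimodularity bookkeeping. Because $Q_{n+1}$ is not full-dimensional in $\RR^{n+1}$, its Ehrhart polynomial must be computed with respect to the sublattice $A_n$, and the whole argument rests on the fact that $T$ identifies $A_n$ with $\ZZ^n$ while sending simple roots to unit vectors; verifying that this is a genuine lattice isomorphism (not merely a linear bijection of the ambient real vector spaces) is what guarantees that no lattice points are created or destroyed under dilation. Everything else reduces to the routine vertex computation recorded above.
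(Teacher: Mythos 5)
Your proof is correct and is essentially the paper's own argument: the paper uses the very same partial-sum map (written there as the lower-triangular all-ones matrix followed by deleting the last, identically zero, coordinate), shows it restricts to a lattice isomorphism from $\left\{x\in\ZZ^{n+1} \mid \sum_i x_i=0\right\}$ onto $\ZZ^n$, and matches vertices exactly as you do, the only cosmetic difference being your choice of lattice basis $\{e_i-e_{i+1}\}$ (whose images are immediately the standard basis vectors) versus the paper's $\{e_i-e_{n+1}\}$, which costs the paper an extra step to verify that the image vectors $u_i=e_i+\cdots+e_n$ form a lattice basis. One subtlety you share with the paper's proof: the identification $T(Q_{n+1})=\P_n(\I_{[n]})$ tacitly reads the complete interval-vector polytope as $\conv\left(\{0\}\cup\I_{[n]}\right)$, i.e., with $0$ as a vertex (exactly as the paper's proof does when it maps $0\mapsto 0$ in its vertex correspondence), and this is how the theorem must be interpreted, since the hull of the nonzero interval vectors alone lies in $\left\{x \mid \sum_i x_i\ge 1\right\}$ and misses the origin.
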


\begin{corollary}\label{CIPvolume}
The volume of the complete interval vector polytope $\P_n(\I_{[n]})$ equals 
the $n^{\textrm{th}}$ Catalan number.
\end{corollary}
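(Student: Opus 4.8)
The plan is to obtain the corollary as a formal consequence of Theorem~\ref{CompleteRootEquiv} together with Postnikov's evaluation of the volume of $Q_{n+1}$. The one general input I would rely on is standard Ehrhart theory: for an $n$-dimensional lattice polytope $\P$, the Ehrhart polynomial $L_\P(t)$ has degree $n$ and its leading coefficient records the volume of $\P$ (taken relative to the lattice in the affine span of $\P$). In particular, the volume of $\P$ is determined by $L_\P(t)$ alone, so any two lattice polytopes with the same Ehrhart polynomial have the same volume.

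With this in hand the argument is immediate. By Theorem~\ref{CompleteRootEquiv} we have $L_{Q_{n+1}}(t) = L_{\P_n(\I_{[n]})}(t)$, so the two polynomials share the same degree and the same leading coefficient; hence $\P_n(\I_{[n]})$ and $Q_{n+1}$ have equal volume. Postnikov showed that the volume of $Q_{n+1}$ is the Catalan number $C_n = \frac{1}{n+1}\binom{2n}{n}$, and this value therefore transfers verbatim to $\P_n(\I_{[n]})$, which is exactly the assertion of the corollary.

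I expect no substantial obstacle in this step: all of the geometric content has already been packaged into Theorem~\ref{CompleteRootEquiv}, and what remains is purely formal. The only points that warrant a line of care are bookkeeping ones---fixing a single volume normalization so that ``leading coefficient'' and the quoted value $C_n$ refer to the same quantity, and observing that $Q_{n+1}$ (which spans the hyperplane $\sum_i x_i = 0$) and $\P_n(\I_{[n]})\subset\RR^n$ are both genuinely $n$-dimensional, so that the volumes being compared are the top-degree data of degree-$n$ Ehrhart polynomials. Since the equality of the two Ehrhart polynomials already forces matching degrees and leading terms, even the equidimensionality check is automatic, and the corollary drops out.
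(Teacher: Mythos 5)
Your proposal is correct and is essentially the paper's own argument: the paper likewise deduces the corollary directly from Theorem~\ref{CompleteRootEquiv} and Postnikov's computation of $\vol(Q_{n+1})$, citing the fact that the leading coefficient of the Ehrhart polynomial recovers the (suitably normalized) volume. Your extra remarks about fixing one normalization and about both polytopes being $n$-dimensional are exactly the right bookkeeping points, and the equality of Ehrhart polynomials makes them automatic, just as you say.
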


A \emph{unimodular simplex} in $\RR^d$ is an $n$-dimensional lattice simplex $\Delta$ whose edge direction at any vertex form a lattice basis for $\ZZ^d \cap
\aff(\Delta)$, where $\aff(\Delta)$ is the affine hull of~$\Delta$. 
In Section \ref{fixed} we prove: 
 
\begin{theorem}\label{unimodular}
The fixed interval vector polytope $\P_n(\I_{i})$ is an $(n-i)$-dimensional unimodular simplex.
\end{theorem}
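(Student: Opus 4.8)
The plan is to exhibit the $n-i+1$ vertices explicitly, check that they are affinely independent (so that $\P_n(\I_i)$ is a simplex of the claimed dimension), and then verify the unimodularity condition directly on the edge vectors. The elements of $\I_i$ are exactly $v_j := \alpha_{j,\,j+i-1} = e_j + e_{j+1} + \cdots + e_{j+i-1}$ for $j = 1,\ldots,n-i+1$, so there are $n-i+1$ of them, the right count for an $(n-i)$-dimensional simplex. First I would record the consecutive differences $d_j := v_{j+1} - v_j = e_{j+i} - e_j$ for $j = 1,\ldots,n-i$. Since the edge directions at the vertex $v_1$ are $w_k = v_{k+1} - v_1 = d_1 + \cdots + d_k$, the families $\{w_k\}$ and $\{d_j\}$ are related by a triangular integer change of basis with unit diagonal, hence generate the same subgroup of $\ZZ^n$. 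Thus it suffices to analyze the vectors $d_1,\ldots,d_{n-i}$, and establishing their linear independence will automatically make all $n-i+1$ points vertices of a genuine simplex.

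Next I would pin down the linear span $V$ of the $d_j$. For each residue class $r \in \{1,\ldots,i\}$ modulo $i$, consider the functional $\ell_r(x) := \sum_{k \equiv r} x_k$. Because $d_j = e_{j+i} - e_j$ has its two nonzero entries in indices $j$ and $j+i$, which lie in the same residue class, every $\ell_r$ vanishes on each $d_j$ and hence on all of $V$. The $\ell_r$ have pairwise disjoint supports, so they are linearly independent, and $\bigcap_r \ker \ell_r$ has dimension $n-i$; once the $d_j$ are shown independent this forces $V = \bigcap_r \ker \ell_r$ and $\ZZ^n \cap V = \{x \in \ZZ^n : \ell_r(x) = 0 \text{ for all } r\}$.

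The heart of the argument, and the step I expect to be the main obstacle, is showing that $d_1,\ldots,d_{n-i}$ form a $\ZZ$-basis of $\ZZ^n \cap V$ --- not merely a $\mathbb{Q}$-basis --- which is precisely unimodularity. Given $x \in \ZZ^n \cap V$ I would solve $x = \sum_j c_j d_j$ coordinate by coordinate: comparing the coefficient of $e_k$ on both sides gives $x_k = c_{k-i} - c_k$, reading $c_j = 0$ for $j \notin \{1,\ldots,n-i\}$. The smallest indices then force $c_k = -x_k$ for $k \le i$, and the recursion $c_k = c_{k-i} - x_k$ telescopes to $c_k = -\sum_{u \ge 0} x_{k-ui}$, a sum of integers. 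This shows the coefficients are uniquely determined integers, giving simultaneously the linear independence of the $d_j$ (the map $(c_j) \mapsto \sum_j c_j d_j$ is injective, so $\dim V = n-i$ and $\P_n(\I_i)$ is an $(n-i)$-simplex) and the fact that they span the lattice. The only remaining check is consistency at the top index of each residue class, where the recursion demands $c_{k-i} = x_k$; this is equivalent to $\ell_r(x) = 0$, which holds by hypothesis. Equivalently, one may note that $\ZZ^n \cap V$ splits as a direct sum over residue classes, each summand being a type-$A$ root lattice whose simple roots are exactly the relevant $d_j$, for which the simple-root basis is standard.
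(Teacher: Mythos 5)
Your proof is correct, and while its lattice computation is a close cousin of the paper's, the way you obtain the simplex and dimension claims is genuinely different. The paper proceeds in two stages: it first shows $\P_n(\I_{i})$ is an $(n-i)$-dimensional simplex by counting its $n-i+1$ vertices and invoking Dahl's flow-dimension graph theorem (the graph has edges $(k,k+i)$, so its connected components are the residue classes mod $i$, and the dimension formula gives $n-i$); it then proves unimodularity separately, by showing that every lattice point $p$ of the affine subspace $A=\{x\in\RR^n : \sum_{j\equiv r \bmod i} x_j = 1 \text{ for all } r\}$ is an integral affine combination of the vertices, using the differences $\alpha_{t,t+i-1}-\alpha_{n-i+1,n}$ and the recursion $Y_1=p_1$, $Y_t=p_t-p_{t-1}$ for $1<t\le i$, $Y_t=p_t-Y_{t-i}$ for $i<t\le n-i$, verified coordinate by coordinate. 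You collapse both stages into a single computation: solving $x=\sum_j c_j d_j$ for the consecutive differences $d_j=e_{j+i}-e_j$ yields uniqueness of the $c_j$ (hence linear independence, hence the $(n-i)$-simplex structure, with no appeal to Dahl's theorem) and integrality (hence the lattice basis property) at the same time, with the $i$ consistency conditions at the top index of each residue class being exactly your equations $\ell_r(x)=0$ cutting out the affine hull. The two recursions are analogues of one another --- your $c_k=c_{k-i}-x_k$ plays the role of the paper's $Y_t=p_t-Y_{t-i}$, and your $d_j$ are, up to sign, the elementary vectors $e_{k,k+i}\in T^{-1}(\I_{i})$ on which the paper's flow-dimension graph is built --- but your basis choice pays off: each $d_j$ has only two nonzero entries, so the triangular system is essentially trivial, the proof is self-contained, and the resulting description of $\ZZ^n\cap V$ as a direct sum of type-$A$ root lattices (one per residue class, with the relevant $d_j$ as simple roots) is a conceptual bonus the paper does not offer. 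One point you share with the paper: the definition of unimodular simplex asks that the edge directions at \emph{any} vertex form a lattice basis, and both proofs check only one vertex; this is harmless, since the edge-direction sets at two vertices of a simplex differ by a unimodular change of basis, but a sentence saying so would make either argument airtight.
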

 
Given an $n$-dimensional polytope
$\mathcal{P}$ with $f_{k}$ $k$-dimensional faces, the \emph{$f$-vector} of
$\mathcal{P}$ is written as
$f(\mathcal{P}):=(f_{-1},f_{0},f_{1},\dots,f_{n})$ where $f_{-1},f_n:=1$ (see,
e.g., \cite{MR1976856} for more about $f$-vectors).
In Section~\ref{pyramid} we show:

\begin{theorem}\label{fvector}
For $n\ge 3$, the $f$-vector of the first pyramidal interval vector polytope
satisfies $f_k(\P_n(\I_{1,n-1})) = \binom{ n-1 }{ k } + \binom{ n+1 }{ k+1 } \,
.$
\end{theorem}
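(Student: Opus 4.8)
The plan is to prove by induction on $n$ that $\P_n(\I_{1,n-1})$ is an $n$-dimensional pyramid whose base is an affine copy of $\P_{n-1}(\I_{1,n-2})$, and then to read off the $f$-vector from the standard pyramid recursion. Throughout write $a:=\alpha_{1,n-1}=\mathbf 1-e_n$ and $b:=\alpha_{2,n}=\mathbf 1-e_1$, so that the candidate vertices are $e_1,\dots,e_n,a,b$. A quick affine-independence check on $e_1,\dots,e_n,a$ (the first $n$ lie on $\sum_i x_i=1$ while $a$ does not) shows $\dim\P_n(\I_{1,n-1})=n$.

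First I would exhibit the pyramid structure. Let $c\in\RR^n$ have $c_i=1$ for $i\ne n-1$ and $c_{n-1}=3-n$, so that $\sum_i c_i=2$. Evaluating $\langle c,\cdot\rangle$ on the candidate vertices gives $1$ on each $e_i$ with $i\ne n-1$, the value $\langle c,a\rangle=\sum_i c_i-c_n=2-1=1$ and likewise $\langle c,b\rangle=1$, and $3-n<1$ on $e_{n-1}$. Hence $\langle c,x\rangle\le 1$ is valid for $\P_n(\I_{1,n-1})$, and its tight set is exactly $F:=\conv\!\big(\{e_i:i\ne n-1\}\cup\{a,b\}\big)$, the convex hull of all candidate vertices except $e_{n-1}$. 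Since $e_{n-1}$ is the unique vertex off this supporting hyperplane, $\P_n(\I_{1,n-1})$ is the pyramid with apex $e_{n-1}$ over $F$; as $e_{n-1}\notin\aff(F)$ and coning a point outside $\aff(F)$ raises dimension by one, $\dim F=n-1$ and $F$ is a facet.

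Next I would identify the base. Consider the linear projection $\pi:\RR^n\to\RR^{n-1}$ that forgets the $(n-1)$-st coordinate. Direct computation sends $e_i\mapsto e_i$ for $i\le n-2$, $e_n\mapsto e_{n-1}$, $a\mapsto\mathbf 1-e_{n-1}$, and $b\mapsto\mathbf 1-e_1$ in $\RR^{n-1}$; these are precisely the vertices of $\P_{n-1}(\I_{1,n-2})$. The restriction of $\pi$ to $\aff(F)$ is injective: its kernel is $\RR e_{n-1}$, while the direction space of $\aff(F)$ lies in $c^{\perp}$, and $\langle c,e_{n-1}\rangle=3-n\ne 0$ for $n\ge 4$. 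As both spaces have dimension $n-1$, $\pi|_{\aff(F)}$ is an affine isomorphism, whence $F\cong\P_{n-1}(\I_{1,n-2})$ as polytopes (in particular $F$ has exactly these $n+1$ vertices).

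With these two steps the induction closes. The base case $n=3$ is checked by hand: $\P_3(\I_{1,2})$ is a square pyramid with $f$-vector $(1,5,8,5,1)$, matching $\binom{2}{k}+\binom{4}{k+1}$. For the inductive step, the pyramid face relation $f_k(\operatorname{pyr}F)=f_k(F)+f_{k-1}(F)$ together with the inductive value $f_k(F)=\binom{n-2}{k}+\binom{n}{k+1}$ yields, by two applications of Pascal's rule, $f_k(\P_n(\I_{1,n-1}))=\binom{n-1}{k}+\binom{n+1}{k+1}$. The main obstacle is the first step: pinning down the non-coordinate supporting hyperplane $\langle c,x\rangle=1$ (in particular the coefficient $3-n$) and verifying that forgetting the $(n-1)$-st coordinate restricts to an affine isomorphism on the base. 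Once the pyramid-over-$\P_{n-1}(\I_{1,n-2})$ structure is established, the $f$-vector recursion and the Pascal computation are routine.
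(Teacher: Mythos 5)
Your proof is correct, but it is organized quite differently from the paper's. The paper works inside a fixed $\RR^n$: it shows that $\mathcal{B}=\conv(e_1,e_n,\alpha_{1,n-1},\alpha_{2,n})$ is a $2$-dimensional quadrilateral (their Lemmas on $\mathcal{B}$, proved via Dahl's flow-dimension graph theorem), deduces by dimension counting that each of $e_2,\dots,e_{n-1}$ lies outside the affine hull of the remaining vertices, and then realizes $\P_n(\I_{1,n-1})$ as an $(n-2)$-fold iterated pyramid over $\mathcal{B}$, applying the Gr\"unbaum pyramid recursion $n-2$ times starting from the $f$-vector $(1,4,4,1)$. You instead induct on $n$, performing a single pyramid step: you exhibit an explicit supporting hyperplane $\langle c,x\rangle =1$ (with $c_{n-1}=3-n$) to show the polytope is a pyramid with apex $e_{n-1}$ over the facet $F$, and then identify $F$ with $\P_{n-1}(\I_{1,n-2})$ via the coordinate-deleting projection, closing the induction with two applications of Pascal's rule from the hand-checked base case $n=3$ (where, correctly, you cannot use the projection, since $3-n=0$ makes it non-injective there). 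The two arguments are cousins---composing your projections down to $n=3$ essentially recovers the paper's iterated-pyramid picture---but your route is more self-contained: the explicit hyperplane and projection replace the appeal to Dahl's theorem, and the induction makes the binomial identity transparent. What the paper's fixed-$n$ decomposition buys in exchange is the explicit quadrilateral base $\mathcal{B}$ and the ``each $e_i$ adds a dimension'' corollary, both of which are reused immediately afterwards in the triangulation argument for the volume computation (Theorem \ref{volpyramid}), whereas your facet $F$ and hyperplane $c$ serve only the $f$-vector count.
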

The $f$-vector of $\P_n(\I_{1,n-1})$ is thus the $n^\text{th}$ row of the
\emph{Pascal 3-triangle} (see, e.g., \cite[Sequence A028262]{sloaneonlineseq}), in particular, it is symmetric. 
We also show that the volume of the $1^\text{st}$ pyramidal interval vector polytope is simple:

\begin{theorem}\label{volpyramid}
For $n \geq 3$, $ \vol ( \P_n(\I_{1,n-1})) = 2(n-2) \, .$
\end{theorem}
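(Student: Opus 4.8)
The plan is to triangulate $\P_n(\I_{1,n-1})$ into two lattice $n$-simplices and add their volumes. Label the $n+2$ vertices (all genuine vertices by Theorem~\ref{fvector}) as the unit vectors $e_1,\dots,e_n$ of interval length $1$ together with $u:=\alpha_{1,n-1}=e_1+\cdots+e_{n-1}$ and $v:=\alpha_{2,n}=e_2+\cdots+e_n$, the two interval vectors of length $n-1$; thus the polytope is the convex hull of the standard simplex and the segment $[u,v]$. Everything hinges on the single affine relation
\[
u+e_n=v+e_1=(1,1,\dots,1),
\]
which exhibits $e_1,u,v,e_n$ as a planar quadrilateral whose diagonals $[e_1,v]$ and $[u,e_n]$ meet at $\tfrac12(1,\dots,1)$. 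I would then set $S_0:=\conv(e_1,\dots,e_n,u)$ and $S_1:=\conv(e_2,\dots,e_n,u,v)$ and claim that $\{S_0,S_1\}$ triangulates $\P_n(\I_{1,n-1})$.

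For the covering $\P_n(\I_{1,n-1})=S_0\cup S_1$, I would take an arbitrary point and write it as a convex combination $x=\sum_i\lambda_ie_i+\mu u+\nu v$. When $\lambda_1\ge\nu$, substituting $v=u+e_n-e_1$ turns $x$ into a combination of $e_1,\dots,e_n,u$ whose coefficients are still nonnegative (the coefficient of $e_1$ becomes $\lambda_1-\nu\ge0$), so $x\in S_0$; when $\lambda_1\le\nu$, substituting $e_1=u+e_n-v$ likewise places $x\in S_1$. For disjoint interiors I would use the functional $a=(3-n,1,1,\dots,1)$: every vertex of $S_0$ satisfies $a\cdot x\le1$ and every vertex of $S_1$ satisfies $a\cdot x\ge1$, with $e_1$ and $v$ the only vertices off the hyperplane $H=\{a\cdot x=1\}$, so $S_0$ and $S_1$ sit in opposite closed halfspaces and meet only in the common facet $\conv(e_2,\dots,e_n,u)\subset H$.

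It remains to compute the two volumes. Since $\vol$ is additive over a lattice triangulation and a lattice $n$-simplex $\conv(w_0,\dots,w_n)$ contributes $\lvert\det(w_1-w_0,\dots,w_n-w_0)\rvert$, for $S_0$ I would expand the edge matrix at the apex $e_n$; subtracting the rows $e_1-e_n,\dots,e_{n-1}-e_n$ from $u-e_n=(1,\dots,1,-1)$ leaves the lone entry $n-2$ in the last coordinate, so $\vol(S_0)=n-2$. For $S_1$ I would sidestep a second determinant by unimodularity: the coordinate reversal $e_i\mapsto e_{n+1-i}$ carries $S_0$ to $\conv(e_1,\dots,e_n,v)$, and the lattice shear fixing $e_2,\dots,e_n$ with $e_1\mapsto u$ carries that simplex to $S_1$; both maps have determinant $\pm1$, so $\vol(S_1)=\vol(S_0)=n-2$ and $\vol(\P_n(\I_{1,n-1}))=(n-2)+(n-2)=2(n-2)$.

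I expect the covering $\P_n(\I_{1,n-1})=S_0\cup S_1$ to be the main obstacle, since the two determinants are routine once the pieces are named, whereas one must be certain the simplices leave no gap. The affine relation $u+e_n=v+e_1$ is exactly what removes the danger: it forces $[e_1,v]$ to be a diagonal rather than an edge of the polytope (a supporting functional $g$ with $g\cdot e_1=g\cdot v$ maximal would give $g\cdot u+g\cdot e_n=g\cdot e_1+g\cdot v$ strictly exceeding its own maximum), so the cut along $H$ meets no edge transversally and the two halfspace pieces are precisely $S_0$ and $S_1$.
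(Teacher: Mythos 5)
Your proof is correct, and it rests on the same decomposition as the paper's: your $S_0$ and $S_1$ are exactly the paper's simplices $S_1=\conv(e_1,\dots,e_n,\alpha_{1,n-1})$ and $S_2=\conv(e_2,\dots,e_n,\alpha_{1,n-1},\alpha_{2,n})$, obtained by cutting the quadrilateral $\mathcal{B}=\conv(e_1,e_n,\alpha_{1,n-1},\alpha_{2,n})$ of Lemma \ref{subdimension} along the diagonal $[\alpha_{1,n-1},e_n]$. The differences lie in the supporting work, and they are real. The paper obtains the triangulation property from its pyramid machinery: it triangulates the $2$-dimensional base $\mathcal{B}$ into $\triangle_1$ and $\triangle_2$ and then erects iterated pyramids with apexes $e_2,\dots,e_{n-1}$ (Corollary \ref{dimsum}), so that the triangulation of the base automatically induces one of the whole polytope. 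You instead prove everything by hand: the covering comes from the affine relation $\alpha_{1,n-1}+e_n=\alpha_{2,n}+e_1$ via the substitution argument with the case split $\lambda_1\ge\nu$ versus $\lambda_1\le\nu$, and the interiors are separated by the explicit functional $a=(3-n,1,\dots,1)$ (which indeed evaluates to $3-n$, $1$, $1$ on $e_1$, $e_{i\ge 2}$, $\alpha_{1,n-1}$ and to $n-1$ on $\alpha_{2,n}$). This makes your argument self-contained where the paper leans on its Section \ref{pyramid} lemmas. For the volumes, the paper computes one edge-vector determinant by reducing to Lemma \ref{A2} and dismisses the second simplex with ``a similar computation''; you compute the first determinant directly by row reduction and eliminate the second computation altogether with a unimodular symmetry (coordinate reversal composed with the lattice shear $e_1\mapsto\alpha_{1,n-1}$), both maps having determinant $\pm 1$. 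Your route costs a bit more convexity bookkeeping but buys independence from the pyramid lemmas and from Lemma \ref{A2}, and the symmetry argument for $\vol(S_1)=\vol(S_0)$ is cleaner than repeating a determinant; the paper's route is shorter given the structure it has already built. (Your closing aside about the supporting functional $g$ and the diagonal $[e_1,\alpha_{2,n}]$ is dispensable, since your explicit covering argument already does that job.)
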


Finally, in Section \ref{conjectures} we lay out future work on $i^\text{th}$
pyramidal interval vector polytopes.


\section{Preliminaries}

In this paper, we will be analyzing the properties of certain classes of
\emph{convex polytopes} which are formed by taking the convex hull of finitely
many points in $\mathbb{R}^{n}$. The \emph{convex hull} of a set 
$A=\{v_{1},v_{2},\dots,v_{m}\}\subset\mathbb{R}^{n}$, denoted
$\operatorname{conv}(A)$, is defined as

\begin{equation}\label{hull} \left\{\lambda_{1}v_{1}+\lambda_{2}v_{2}+\dots+\lambda_{m}v_{m}
\mid\lambda_{1},\lambda_{2},\dots,\lambda_{m}\in\mathbb{R}_{\ge
0}\ \textrm{ and }\ \sum_{i=1}^{m}\lambda_{i}=1\right\}.\end{equation} 
The
polytope $\operatorname{conv}(A)$ is contained in  the \emph{affine hull} $\operatorname{aff}(A)$ of
$A$, defined as in \eqref{hull} but without the restriction
that $\lambda_{1},\lambda_{2},\dots,\lambda_{m}\ge 0$. We call a set of points
\emph{affinely} (resp.\ \emph{convexly}) \emph{independent} if each point is not
in the affine (resp.\ convex) hull of the rest. The \emph{vertex set} of a polytope is the minimal convexly
independent set of points whose convex hull form the polytope. 
A polytope is \emph{d-dimensional} if the dimension of its affine hull
is $d$. We denote the dimension of the polytope $\mathcal{P}$ as
$\operatorname{dim}(\mathcal{P})$. We call a $d$-dimensional polytope a
\emph{d-simplex} if it has $d+1$ vertices.

A \emph{lattice point} is a point with integral coordinates. A \emph{lattice
polytope} is a polytope whose vertices are lattice points.
The \emph{normalized volume} of a polytope $\mathcal{P}$, denoted $\operatorname{vol}(\mathcal{P})$, is the volume with respect to a unimodular simplex (recall definition in Section 1). We will refer to the normalized volume of a polytope as its \emph{volume}.
Note that the leading coefficient of the Ehrhart polynomial of a lattice polytope
$\P$ is $\frac{1}{d!} \vol(\P)$. 

A \emph{hyperplane} is a set of the form
\[H:=\left\{x\in\mathbb{R}^{n} \, | \, a_{1}x_{1}+\cdots+a_{n}x_{n}=b\right\},\] where not all
$a_{j}$'s are 0. The \emph{half-spaces} defined by this hyperplane are formed by
the two weak inequalities corresponding to the equation defining the hyperplane.
A \emph{face} of
$\mathcal{P}$ is the intersection of a hyperplane and $\mathcal{P}$ such that
$\mathcal{P}$ lies completely in one half-space of the hyperplane. This face is
a polytope called a \emph{k-face} if its dimension is $k$. A vertex is a
$0$-face and an \emph{edge} is a $1$-face. Given a $d$-dimensional polytope
$\mathcal{P}$ with $f_{k}$ $k$-dimensional faces, the \emph{$f$-vector} of
$\mathcal{P}$ is written as
$f(\mathcal{P}):=(f_{-1},f_{0},\dots,f_{n})$. For example, a triangle
$\triangle$  is a $2$-dimensional polytope with $3$ vertices and $3$ edges and thus has $f$-vector $f(\triangle)=(1,3,3,1)$. 


\section{Complete Interval Vector Polytopes}\label{complete}

\commentout{
In \cite{MR2825295} Dahl introduces a class of polytopes based on interval
vectors. An $interval$ $vector$ is a $(0,1)$-vector $x\in\mathbb{R}^n$ such
that, if $x_i = x_k = 1$ for $i<k$, then $x_j = 1$ for every $i\le j\le k$.  Let
$\alpha_{i,j}:=e_i + e_{i+1} + \dots + e_j$ for $i\le j$.  The \emph{interval length} of
$\alpha_{ij}$ is $j-i+1$. If $\mathcal{I}$ is a set of interval vectors then we
define the polytope $P_{[n]}(\mathcal{I}):=$ $\operatorname{conv}(\mathcal{I})$.  We
are interested in a number of polytopes that arise when we consider various such
sets $\mathcal{I}$.

Denote $\{1,\cdots,n\}$ by $[n]$.  Let $\mathcal{I}_n =
\{\alpha_{i,j}:i,j\in[n], i\le j\}$.  The \emph{complete interval vector
polytope} is defined as $\mathcal{P}({\mathcal{I}_n} ):=$
$\operatorname{conv}(\mathcal{I}_n)$. Computing the Ehrhart polynomials and
volumes of small-dimensional polytopes with the aid of a computer, we notice an
astounding connection. We computed the volume of the first 9 complete interval
vector polytopes, and found that in each case
\begin{equation*}\label{Catalan}
\operatorname{vol}(\P_n({\I})) = C_n
\end{equation*}
where $C_n := \frac{1}{n+1}\begin{pmatrix}2n\\n\end{pmatrix}$ is the
$n^{\textrm{th}}$ Catalan number.  We will prove that this is the case for any
$n$.

In \cite{MR2487491}, Postnikov defines the \emph{complete root polytope}
$Q_n\subset\mathbb{R}^n$ as the convex hull of $0$ and $e_i-e_j$ for all $i<j$. 
It is shown that the volume of $Q_{n}$ is $C_{n-1}$, the same as expected for
$\P_{n+1}({\I})$.  In fact, we prove, in a discrete-geometric
sense, that the two polytopes are interchangeable, that is, the two polytopes
have the same Ehrhart polynomial.
\begin{theorem}\label{CompleteRootEquiv}
$L_{Q_{n+1}}(t) = L_{\P_n({\I})}(t)$.\\
\end{theorem}
}

In \cite{MR2825295} Dahl provides a method for determining
the dimension of these polytopes which we will use throughout this paper. 
We utilized the software packages {\tt polymake} \cite{polymake} and {\tt LattE}
\cite{latte,koeppelatte} to find most of the patterns described by our results.

\begin{proof}[Proof of Theorem \ref{CompleteRootEquiv}] Each of the vertices of $Q_n$ are vectors with entries that sum to
zero, so any linear combination (and specifically any convex combination) of
these vertices also has entries who sum to zero.
Define $B := \{x\in\mathbb{R}^n\, | \, \sum_{i=1}^n x_i = 0\}$; thus $Q_n\subset B$, and 
$B$ is an $(n-1)$-dimensional affine subspace of~$\mathbb{R}^n$.

Consider the linear transformation $T$ given by the $n\times n$ lower triangular
matrix with entries $t_{i,j} = 1$ if $i\ge j$ and $t_{i,j} = 0$ otherwise.  Then
 \[T(B) \subseteq A := \left\{x\in\mathbb{R}^n \, | \, x_n=0\right\}.\]  Since (the matrix representing) $T$ has
determinant 1, it is injective when restricting the domain to $B$.  For the same
reason, we know that for any $y\in A$, there exists $x\in\mathbb{R}^n$ such that
$y=T(x)$.  But since $y_n = \sum_{i=1}^n x_i = 0$, then $x\in B$, so that
$T|_B:B\to A$ is surjective, and therefore a linear bijection.

 Also, the projection $\Pi:A\to \mathbb{R}^{n-1}$ given by
\[\Pi\left((x_1,\ldots, x_{n-1}, 0)\right) = (x_1,\ldots,x_{n-1}),\]
 is clearly a linear bijection.

Now we show that the linear bijection $\Pi\circ T|_{B}: B\to\mathbb{R}^{n-1}$ is
a lattice-preserving map, i.e., an isomorphism from $B \cap \mathbb{Z}^n$ to $\mathbb{Z}^{ n-1 }$ (viewed
as additive groups).  First we find a lattice basis for $B$.  Consider \[C
:= \left\{e_{i,n} = e_i-e_n \, | \, i<n \right\}.\]  We notice that any integer point of $B$ can be
represented as \[\left(a_1,\ldots, a_{n-1}, -\sum_{i=1}^{n-1}a_i\right) =
\sum_{i=1}^{n-1}a_ie_{i,n}\]  and so $C$ is a lattice basis.

Note that $\Pi\circ T(e_{i,n}) = e_i + \cdots + e_{n-1} =: u_i$.  Therefore
\[\Pi\circ T(C) = \{u_i \, | \, i\le n-1\} =: U \, .\]  We notice that $e_{n-1} = u_{n-1}$
and $e_i = u_i - u_{i+1}$, so that each of the standard unit vectors $e_i$ of
$\mathbb{R}^{n-1}$ is an integral combination of the vectors in $U$.  Since the
standard basis is a lattice basis, so is $U$, thus $\Pi\circ T|_{B}$ is a
lattice-preserving map.
Since our bijection is linear and lattice-preserving, all we have left to show
is that the vertices of $Q_n$ map to those of $\P_{n-1}(\I_{[n-1]})$.  By
linearity, $\Pi\circ T(0) = 0$, and  given any vertex $\alpha_{i,j}$ of
$\P_{n-1}(\I_{[n-1]})$, we know that $\Pi\circ T(e_{i,j+1}) = \alpha_{i,j}$
where $i< j+1 \le n$ so that $\Pi\circ T|_{B}$ maps vertices to vertices.
\end{proof}
Corollary \ref{CIPvolume} follows directly from this theorem and \cite{MR2487491}, since the leading coefficient of the Ehrhart polynomial of $\P_{n}$ is $\frac{1}{n!}$ times the volume of~$\P_{n}$.


\section{Fixed Interval Vector Polytopes}\label{fixed}

The following construction is due to \cite{MR2825295}. 
We define the set of \textit{elementary vectors} as containing
all $e_{i,j} = e_i - e_j$, each unit vector $e_i$, and the zero vector.  Let $T$
be the lower triangular matrix from the proof of Theorem
\ref{CompleteRootEquiv}.  We notice that $T(e_i) = \alpha_{i,n}$ and $T(e_{i,j})
= \alpha_{i,j-1}$.  So the image of an elementary vector is an interval vector. 
Since $T$ is invertible, for any set of interval vectors $\mathcal{I}$, there is
a unique set $\mathcal{E}$ of elementary vectors such that $T(\mathcal{E}) =
\mathcal{I}$, namely $  \mathcal{E}=T^{-1}(\mathcal{I})$.

Thus for any interval vector polytope $\mathcal{P}_n(\mathcal{I}_S)\subset\mathbb{R}^{n}$,
we can construct the corresponding \emph{flow-dimension graph} $G_{\mathcal{I}_S}
= (V,E)$ as follows.  Let $\mathcal{E}_S = T^{-1}(\mathcal{I}_S)$.  Let the
vertex set $V = [n]$. Specify a subset $V_1 = \{j\in V \;  | \; 
e_j\in\mathcal{E}_S\}$, and define the directed edge set $E = \{(i,j) \;  | \; 
e_{i,j}\in\mathcal{E}_S\}$. Let $k_0$ denote the number of connected
components $\mathcal{C}$ of the graph $G$ (ignoring direction) so that
$\mathcal{C}\cap V_1 $ is empty.

Recall that the fixed interval vector polytope $\P_n(\I_{i})$
is the convex hull of all interval vectors in $\mathbb{R}^{n}$ with interval length $i$.
For example, the fixed interval vector polytope with $n=5$, $i=3$ is
$$\P_5(\I_{3}) = \operatorname{conv} \big( (1,1,1,0,0) \, , \, (0,1,1,1,0)
\, , \, (0,0,1,1,1) \big)$$
and its flow-dimension graph is depicted in Figure~\ref{flowdim}.

\begin{figure}[htb]
\begin{center}
\includegraphics[scale=.5]{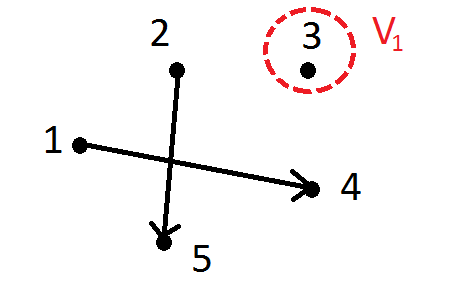}
\end{center}
\caption{The flow-dimension graph of $\P_5(\I_{3})$.}\label{flowdim}
\end{figure}

\begin{theorem}[Dahl
 \cite{MR2825295}]\label{graphdimension} 
If $0\in\operatorname{aff}(\I_S)$, then the dimension of $\P_n(\I_S)$ is
$n-k_0$.  Else, if $0\notin\operatorname{aff}(\mathcal{I}_S)$ then the dimension
of $\P_n(\I_S)$ is $n-k_0-1$.
\end{theorem}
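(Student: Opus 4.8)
The plan is to transport the entire problem from interval vectors to elementary vectors via the bijection $T$, where it becomes a purely graph-theoretic statement about $G_{\I_S}$. Since $T$ is an invertible linear map, $\dim \P_n(\I_S) = \dim \conv(\mathcal{E}_S)$ with $\mathcal{E}_S = T^{-1}(\I_S)$, and because the dimension of a polytope equals that of its affine hull I may work with $\aff(\mathcal{E}_S)$ throughout. Moreover $\aff(\mathcal{E}_S) = T^{-1}(\aff(\I_S))$ and $T(0)=0$, so the hypothesis $0 \in \aff(\I_S)$ is equivalent to $0 \in \aff(\mathcal{E}_S)$; this lets me decide which of the two cases applies entirely on the elementary-vector side.

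First I would compute $\dim \operatorname{span}(\mathcal{E}_S)$, the dimension of the linear span of the elementary vectors, in terms of the graph. This spanning set consists of the unit vectors $e_j$ for $j \in V_1$ together with the difference vectors $e_i - e_j$ for each directed edge $(i,j) \in E$. The difference vectors are exactly the edge vectors of the underlying graph, so on each connected component $C$ they span the hyperplane $\{x \in \RR^C : \sum_{i\in C} x_i = 0\}$, of dimension $|C|-1$. Adjoining a single unit vector $e_j$ with $j\in C$ (whose coordinate sum is $1$, hence lying outside that hyperplane) raises this to the full coordinate space on $C$, of dimension $|C|$, and further unit vectors in $C$ contribute nothing new. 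Summing the component contributions, and recalling that $k_0$ counts the components $C$ with $C\cap V_1 = \emptyset$, the $k_0$ unit-free components each contribute $|C|-1$ while the remaining components each contribute $|C|$; since $\sum_C |C| = n$, this gives $\dim \operatorname{span}(\mathcal{E}_S) = n - k_0$.

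It then remains to pass from the span to the affine hull. Using the standard identity $\operatorname{span}(A) = \aff(A \cup \{0\})$, adjoining the origin to $\aff(\mathcal{E}_S)$ either leaves the affine hull unchanged, when $0 \in \aff(\mathcal{E}_S)$, or raises its dimension by exactly one, when $0 \notin \aff(\mathcal{E}_S)$. Hence $\dim \aff(\mathcal{E}_S)$ equals $\dim \operatorname{span}(\mathcal{E}_S) = n-k_0$ in the first case and $\dim \operatorname{span}(\mathcal{E}_S) - 1 = n - k_0 - 1$ in the second, which are precisely the two claimed values.

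I expect the main obstacle to be the rank computation of the second paragraph: verifying cleanly that the unit vectors interact with the edge space one component at a time, so that each component meeting $V_1$ contributes $|C|$ and each disjoint from $V_1$ contributes $|C|-1$. The reduction through $T$ and the final case split are routine once this component-wise rank formula is established.
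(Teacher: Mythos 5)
Your proof is correct, but there is nothing in this paper to compare it against: Theorem \ref{graphdimension} is quoted from Dahl \cite{MR2825295} and used as a black box, with no proof given here. What you have written is therefore a self-contained proof of the cited result, and it holds up at every step. The reduction through $T$ is legitimate ($T$ is an invertible linear map with $T(0)=0$, so it preserves both the dimension of the polytope and whether the origin lies in the affine hull); the heart of the argument --- the rank count --- is the standard incidence-matrix computation: the edge vectors $e_i-e_j$ of a connected component $C$ span exactly the sum-zero hyperplane of $\RR^C$ (a spanning tree gives $|C|-1$ independent such vectors), one unit vector from $V_1\cap C$ lifts this to all of $\RR^C$ while further unit vectors in $C$ add nothing, and since distinct components have disjoint coordinate supports the ranks add, yielding $\dim\operatorname{span}(\mathcal{E}_S)=n-k_0$; finally, the identity $\operatorname{span}(A)=\aff(A\cup\{0\})$ converts this linear-algebra count into the affine statement, producing precisely the dichotomy of the theorem (affine hull equal to the span when $0\in\aff(\mathcal{E}_S)$, a hyperplane-like slice of it of one dimension less when $0\notin\aff(\mathcal{E}_S)$). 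The only point worth making explicit if you write this up formally is the observation that every vector of $\mathcal{E}_S$ is supported within a single component, which is what justifies summing the component contributions; you use this implicitly and it is true by construction of the flow-dimension graph.
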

For a fixed $i$,
\[
 T^{-1}(\I_{i})=\mathcal{E}_i = \{e_{k,k+i}\;|\;k\le n-i\}\cup\{e_{n-i+1}\} \, .
\]
The corresponding
flow-dimension graph is $G_{\P_n(\I_{i})}=(V,E)$ where $V=\{1,\ldots,n\}$ and
$E = \{ (k,k+i)\,|\,k\in[n-i] \}$.
Then $V_{1}=\{n-i+1\}$ corresponds to $e_{n-i+1}\in\mathcal{E}_i$.

Two nodes $a,b$ in a graph $G=(V,E)$ are said to be \emph{connected} if there
exists a \emph{path} from $a$ to $b$, that is there exist $q_{0}, \dots,
q_{s}\in V$ such that $(a,q_{0}),(q_{0},q_{1}),\dots, (q_{s},b) \in E$.

\begin{lemma}\label{path}
Let $a,b$ be nodes in the flow-dimension graph $G_{\P_n(\I_{i})}$.
Then $a$ and $b$ are connected if and only if $a \equiv b \; \operatorname{mod}i$.
\end{lemma}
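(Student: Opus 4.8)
The plan is to pin down the connected component of an arbitrary node explicitly and then read off the biconditional. The governing observation is structural: by the definition of the edge set $E = \{(k,k+i)\mid k\in[n-i]\}$, every edge of $G_{\P_n(\I_{i})}$ joins two nodes that differ by exactly $i$, hence two nodes lying in the same residue class modulo $i$. Since connectivity is taken ignoring direction (as in the definition of the component count $k_0$), a path between $a$ and $b$ is a sequence of such edges, and I can propagate this residue invariant along it.

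For the forward implication, suppose $a$ and $b$ are connected, say by a path $a = p_0, p_1, \ldots, p_s = b$ in which each consecutive pair $\{p_\ell, p_{\ell+1}\}$ is an edge. I would argue by a short induction on $\ell$: the base case $p_0 = a$ is congruent to itself, and each step preserves the residue class mod $i$ because the two endpoints of any edge differ by $i$. Therefore $a \equiv p_s = b \pmod i$.

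For the reverse implication, assume $a \equiv b \pmod i$ and, without loss of generality, $a \le b$, so that $b = a + mi$ for some nonnegative integer $m$. I would exhibit the explicit path
\[
a,\; a+i,\; a+2i,\; \ldots,\; a+mi = b,
\]
and verify that each consecutive pair is an edge: the pair $\{a+ji,\, a+(j+1)i\}$ arises from $(a+ji,\, a+(j+1)i)\in E$, which holds precisely when $a+ji \in [n-i]$, i.e.\ when $a+(j+1)i \le n$.

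The one point that requires care — and the only real obstacle — is confirming that every intermediate node of this path stays within $[n]$, so that the required edges actually exist. This follows from monotonicity: the nodes $a+ji$ increase with $j$, and the largest of them is $b = a+mi \le n$, so each intermediate node $a+(j+1)i$ is at most $b \le n$ and the corresponding edge is present. Assembling the two implications yields the lemma.
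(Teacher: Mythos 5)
Your proof is correct and takes essentially the same approach as the paper's: the published proof consists of just the observation that every edge joins two nodes differing by exactly $i$, so all nodes of a path lie in one residue class modulo $i$, with the converse (your explicit path $a,\,a+i,\,\ldots,\,a+mi=b$, including the check that intermediate nodes stay within $[n]$ so the needed edges exist) left implicit. Your write-up is simply a more complete rendering of the same argument.
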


\begin{proof}
The edges in $G_{\P_n(\I_{i})}$ are of the form $(k, k+i)$, and therefore the nodes of a path in
$G_{\P_n(\I_{i})}$ are all in the same congruence class modulo~$i$.
\commentout{
Assume without loss of generality $a< b$. Suppose $a$ and $b$ are connected by
the path $q_{0}, \dots, q_{s}\in V$. Therefore by definition of $E$, we have
\begin{align*}
q_{0} &= a + i \\
q_{1} &= q_{0} + i = a + 2i \\
&\ \vdots \\
q_{s} &= q_{s-1} + i = a + (s+1)i \\
b &= q_{s} + i = a + (s+2)i \, .
\end{align*}
Thus $a \equiv b \; \operatorname{mod}i$ by definition. 

Conversely, suppose that $a \equiv b \; \operatorname{mod}i$. Then there
exists $m \in \mathbb{N}$ such that
\[
b = a + mi 
  = a + (m-1)i + i \, .
\]
Since $b$ and $a+(m-1)i$ differ by $i$, then by definition of $E$, there is an
edge between these nodes. Call this edge $(q_{t},b) \in E$. Similarly, we have
\begin{align*}
a + (m-1)i &= a+(m-2)i + i &\Rightarrow& \qquad (q_{t},q_{t-1}) \in E \\
a + (m-2)i &= a+(m-3)i + i &\Rightarrow& \qquad (q_{t-1},q_{t-2}) \in E \\
 \vdots \\
a + 2i &= (a+i) + i  &\Rightarrow& \qquad (q_{1},q_{0}) \in E \\
a + i &= a + i &\Rightarrow& \qquad (q_{0},a) \in E \, .
\end{align*}
Hence $q_0,q_1,\ldots,q_t$ define a path from $a$ to $b$, so $a$ and $b$
are connected.
}
\end{proof}

\begin{proposition}
$\P_n(\I_{i})$ is an $(n-i)$-dimensional simplex.
\end{proposition}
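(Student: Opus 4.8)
The plan is to establish two things: that $\dim \P_n(\I_i) = n - i$, and that $\P_n(\I_i)$ has exactly $n - i + 1$ vertices, since a $d$-dimensional polytope with $d + 1$ vertices is by definition a $d$-simplex. I would begin by counting the defining interval vectors. The interval vectors of length $i$ in $\RR^n$ are precisely the $\alpha_{k,k+i-1}$ with $1 \le k \le n-i+1$, so there are exactly $n-i+1$ of them; these are the only candidate vertices. The crucial observation will be that once the dimension is shown to equal $n-i$, a set of $n-i+1$ points whose convex hull has dimension $n-i$ must be affinely independent, so every one of them is a vertex and the polytope is a simplex.

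For the dimension, I would appeal to the flow-dimension graph machinery and Theorem~\ref{graphdimension}. Recall that $G_{\P_n(\I_{i})}$ has vertex set $\{1,\dots,n\}$, edge set $E = \{(k,k+i) : k \in [n-i]\}$, and $V_1 = \{n-i+1\}$. By Lemma~\ref{path}, two nodes lie in the same connected component exactly when they are congruent modulo $i$, so the connected components of $G_{\P_n(\I_{i})}$ are exactly the $i$ residue classes modulo $i$ (each nonempty since $n \ge i$). Because $V_1$ is the single node $n-i+1$, it meets exactly one of these components, leaving $i-1$ components disjoint from $V_1$; hence $k_0 = i - 1$.

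Next I would check the hypothesis of Theorem~\ref{graphdimension}. Every interval vector of length $i$ has coordinate sum exactly $i$, and coordinate sum is an affine function, so every point of $\aff(\I_i)$ has coordinate sum $i$. Since $i \ge 1 > 0$, the zero vector does not lie in $\aff(\I_i)$, i.e., $0 \notin \aff(\I_i)$. Theorem~\ref{graphdimension} then gives $\dim \P_n(\I_i) = n - k_0 - 1 = n - (i-1) - 1 = n - i$.

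Finally I would assemble the two facts: the $n-i+1$ interval vectors of length $i$ span an affine hull of dimension $n-i$, and a spanning set of $n-i+1$ points in an $(n-i)$-dimensional affine space is necessarily affinely independent. Affinely independent points are convexly independent, so each is a vertex, and we conclude that $\P_n(\I_i)$ is an $(n-i)$-simplex. I expect no serious obstacle here, as the argument is almost entirely bookkeeping; the only point requiring care is the computation of $k_0$, where one must correctly invoke Lemma~\ref{path} to identify the components with residue classes and verify that $V_1$ lies in precisely one of them.
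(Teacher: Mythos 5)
Your proposal is correct and follows essentially the same route as the paper: count the $n-i+1$ interval vectors of length $i$, compute $k_0 = i-1$ via Lemma~\ref{path}, and apply Theorem~\ref{graphdimension} to conclude the dimension is $n-i$. If anything, you are slightly more careful than the paper's own proof, which leaves implicit both the verification that $0 \notin \aff(\I_i)$ (needed to select the correct case of Dahl's theorem) and the observation that $n-i+1$ points spanning an $(n-i)$-dimensional affine hull are automatically affinely independent, hence all of them are genuinely vertices.
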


\begin{proof}
For a given dimension and interval length, an interval vector is uniquely
determined by the location of the first 1, hence we can determine the number of
vertices of $\P_n(\I_{i})$ by counting all possible placements of the first
1 in an interval of $i$ 1's.  Since the string must have length $i$, the number
of spaces before the first 1 must not exceed $n-i$ and so there are $n-i+1$
possible locations for the first 1 in the interval to be placed. Thus,
$\P_n(\I_{i})$ has $n-i+1$ vertices.

By Lemma \ref{path} we know there are $i$ connected components in the
flow-dimension graph $G_{\P_n(\I_{i})}$ and since $V_{1}$ has only one
element, $k_{0} = i - 1$. Thus by Theorem \ref{graphdimension} the dimension of
$\P_n(\I_{i})$ is $n - i$. Therefore $\P_n(\I_{i})$ is an
$(n-i)$-dimensional simplex.
\end{proof}

\begin{proof}[Proof of Theorem \ref{unimodular}]
It remains to show that $\P_n(\I_{i})$ is unimodular.
Consider the affine space where the sum over every $i^{\textrm{th}}$ coordinate
is 1,
\[A=\left\{\mathbf{x}\in\mathbb{R}^{n}\;\Bigg|\;\displaystyle{\sum_{j\equiv
k\operatorname{mod}i}}x_{j}=1,\;\text{ for all }\,k\in[i]\right\}.\]
Since the vertices of $\P_n(\I_{i})$ have interval length $i$, they are in
$A$; thus $\P_n(\I_{i})\subset A$. 
We want to show that the following vectors in $\P_n(\I_{i})$ form a
lattice basis for $A$: 
\[
\begin{array}{lrl}
w_1 &=& \alpha_{1,i}- \alpha_{n-i+1,n}\\
w_2 &=& \alpha_{2,i+1}-\alpha_{n-i+1,n}\\
&\vdots&\\
w_{n-i} &=& \alpha_{n-i,n-1}-\alpha_{n - i+1,n} \, . \end{array}\]
We will do this by showing that any integer point $p\in A$ can be expressed as an
integral linear combination of the proposed lattice basis, that is, there exist
integer coefficients $Y_1,\ldots,Y_{n-i}$ so that $p=Y_1w_1 + \cdots +
Y_{n-i}w_{n-i} + \alpha_{n-i+1,n}$.

We first notice that $p$ can be expressed as
\[\left(p_1, p_2, \ldots, p_{n-i}, \sum_{\underset{j\equiv
t-i+1\operatorname{mod}i}{j\le n-i}}(-p_j) + 1,\sum_{\underset{j\equiv
t-i+2\operatorname{mod}i}{j\le n-i}}(-p_j)+ 1, \ldots,\sum_{\underset{j\equiv t
\operatorname{mod}i}{j\le n-i}}(-p_j) + 1\right)\]
by solving for the last term in each of the equations defining $A$. Let
\[Y_t = \left\{\begin{array}{lrl}
	p_1&\text{ if }&t = 1,\\
	p_t - p_{t-1}&\text{ if }&1<t\le i,\\
	p_t - Y_{t-i}&\text{ if }&i<t\le n-i.
\end{array}\right.\]
 Then each $Y_t$ is an integer.  We claim that \[Y_1w_1 + \dots + Y_{n-i}w_{n -
i} + \alpha_{n-i+1,n} = p.\] Clearly the first coordinate is $p_1$ since $w_1$
is the only vector with an element in the first coordinate.  Next consider the
$t^{\text{th}}$ coordinate of this linear combination for $1<t\le i$, by summing
the coefficients of all the vectors who have a 1 in the $t^{\text{th}}$
position:
\[Y_t + Y_{t-1} + Y_{t-2} + \dots + Y_1 = p_t - p_{t-1} + p_{t-1} - p_{t-2} +
\cdots + p_{2} - p_{1} + p_1 = p_t\]
We next consider the $t^{\text{th}}$ coordinate of the combination for $i<t\le
n-i$ by summing the coefficients of the vectors who have a 1 in the
$t^{\text{th}}$ position.
\[Y_t + Y_{t-1} + \cdots + Y_{t-i+1} = (p_t - Y_{t-1} - \dots - Y_{t-i+1}) +
Y_{t-1}  + \dots + Y_{t-i+1} = p_t\]
Finally, we consider the $t^{\text{th}}$ coordinate of the combination for
$n-i<t\le n$, noticing that each coordinate from $w_1$ to $w_t$ has a $-1$ in
the $(t-i)^{\text{th}}$ position, and $\alpha_{n-i+1,n}$ has a 1 in this
position.  This gives 
\[-(Y_1 + Y_2 + \dots + Y_{t-i}) + 1.\]
Applying the two relations we have defined between coordinates, and calling
$\langle t\rangle$ the least residue of $t\operatorname{mod}i$, we see that
\begin{eqnarray*}
-(Y_1 + Y_2 + \dots + Y_{t-i}) + 1 &=& -(Y_1 + Y_2 + \dots + Y_{t-2i} + p_{t-i})
+ 1\\
&=& - (Y_1 + Y_2 + \dots + Y_{t-3i} + p_{t-2i} + p_{t-i}) + 1\\
&=& - \left(Y_1 + Y_2 + \dots + Y_{\langle t\rangle} + \sum_{\underset{j\equiv
t\operatorname{mod}i}{i<j\le n-i}}p_j\right) + 1\\
&=&-\left(\sum_{\underset{j\equiv t\operatorname{mod}i}{j\le n-i}}p_j\right) +
1.
\end{eqnarray*}
Thus $p = Y_1w_1 + Y_2w_2 + \dots + Y_{n-i}w_{n-i} + \alpha_{n-i+1,n}$ and so
$w_1,\ldots,w_{n-i}$ form a lattice basis of $A$.
Thus the vertices of $\P_n(\I_{i})$ form a lattice basis, and so $\P_n(\I_{i})$ is a unimodular simplex.
\end{proof}

\commentout{
\begin{proof}
Consider the affine space where the sum over every $i^{\textrm{th}}$ coordinate
is 1,
\[A=\left\{\mathbf{x}\in\mathbb{R}^{n}\;\Bigg|\;\displaystyle{\sum_{j\equiv
k\operatorname{mod}i}}x_{j}=1,\;\forall\,k\in[i]\right\}.\]
Since the vertices of $\P_n(\I_{i})$ have interval length $i$, they are in
$A$. Thus $\P_n(\I_{i})\subset A$. Define the projection
$\Pi:A\rightarrow\mathbb{R}^{n-i}$ by truncating the last $i$ coordinates of a
vector in $A$. Since $\P_n(\I_{i}) \subset A$, we can solve for the last
$i$ components in terms of the other $n-i$ components using the equations
defining $A$, hence $\Pi$ is a bijection. $\Pi$ is lattice-preserving, thus the
volume of $\Pi(\P_n(\I_{i}))$ is equal to the volume of
$\P_n(\I_{i})$. 

$\Pi$ transforms the vertices of $\P_n(\I_{i})$ to
$\{(v'_{1},\dots,v'_{n-i+1}) \}$, where $v'_{k}$ has the first 1 starting in the
$k^{\textrm{th}}$ position and $v'_{n-i+1}$ is the zero vector. Cayley and
Menger\cite{MR2141304} show that the volume of this simplex is the determinant
of the following matrix
$$\begin{bmatrix}
v'_{1}-v'_{n-i+1} & v'_{2}-v'_{n-i+1} & v'_{3}-v'_{n-i+1} & \dots &
v'_{n-i}-v'_{n-i+1} \\
\end{bmatrix}.$$ Since $v'_{n-i+1}$ is the zero vector, that amounts to finding
the determinant of the matrix
$$\begin{bmatrix}
1 &  & \multicolumn{2}{c}{\text{\kern-1.50em\smash{\raisebox{-2ex}{\Huge 0}}}}
\\
 & \ddots & \\
\multicolumn{2}{c}{\text{\kern -.75em\smash{\raisebox{0ex}{\Huge B}}}}& & 1 \\
 \end{bmatrix}, $$
where B is some combination of 0's and 1's. The determinant of this matrix is 1.
 Thus the volume of this simplex is 1 and so $\P_n(\I_{i})$ is a unimodular
simplex.
$\mathbb{R}^{n-i+1}$ we will prove that all of the standard basis vectors can be
written as a integer combination of $\{(v'_{1},\dots,v'_{n-i+1}) \}$.
0v'_{j+2} + \dots + 0v'_{j+i-1}+v'_{j+i} - 'v_{j+i+1} + \dots +
v'_{j+hi}-v'_{j+hi+1}$ 
n-i$ so $v_{j+hi} \in \{ ( v_{1}, \dots v_{n-i+1} ) \}$ and $v_{j+hi+1} \in \{ (
v_{1}, \dots v_{n-i+1} ) \}$ and since $v_{j+hi} > 2n-i$ the vectors are all of
the form $(0, \dots, 0, 1, \dots, 1)$ where the interval is of length
$(n-i)-k+1$. Thus $v_{j+hi+1}$ has its first 1 one component later than
$v_{j+hi}$ and has interval length 1 less than $v_{j+hi}$. Hence $v_{j+hi} -
v_{j+hi+1} = (0, \dots,0,1,0,\dots,0)$ where the 1 occurs in the $({j+hi})$-th
position.
(0,\dots,1,\dots,1,0,\dots,0) where the interval begins in the k-th position and
is of length i.  Therefore 
the $j$-th position and the -1 is in the $(j+i)$-ith position. Hence
$(v_{j}-v_{j+1})+(v_{j+i}-v_{j+i+1})$ will have a one in the $j$-th position and
a -1 in the $(j+2i)$-th position, continue this process until -1 is in the
$(j+hi)$-th position where $j+hi > n-2i$ Then by the above work
$v_{j+hi}-v_{j+hi+1}$ will have a one in the $(j+hi)$-th position and zeros
elsewhere and so
0v'_{j+i-1}+v'_{j+i} - 'v_{j+i+1} + \dots + v'_{j+hi}-v'_{j+hi+1}$. 
and so $\P_n(\I_{i})$ is a unimodular simplex by Theorem.
\end{proof}
}


\section{The first pyramidal interval vector polytope}\label{pyramid}

Recall that $\P_{n}(\I_{1,n-1})$ is the convex hull of all
vectors in $\mathbb{R}^{n}$ with interval length 1 or $n-1$.
For example,
$$ \P_{4}(\I_{1,3}) = \operatorname{conv} \big( (1,0,0,0) \, , \,(0,1,0,0) \, ,
\,(0,0,1,0) \, , \,(0,0,0,1) \, , \,(1,1,1,0) \, , \,(0,1,1,1) \big) \, , $$
whose flow-dimension graph is depicted in Figure~\ref{firstpyramidflowgraph}.

\begin{figure}[htp]
\begin{center}$\includegraphics[scale=.3]{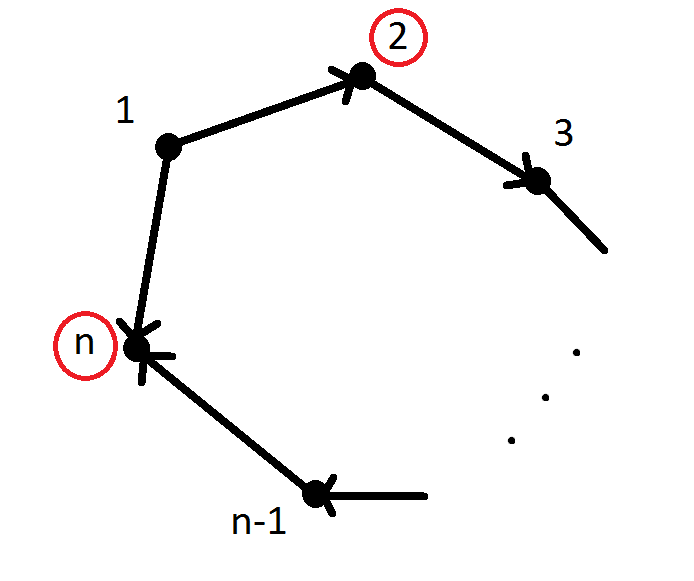}$\end{center}
\caption{$G_{\P_{n}(\I_{1,n-1})}$.}\label{firstpyramidflowgraph}
\end{figure}

\begin{proposition}\label{dimension}
The dimension of $\P_{n}(\I_{1,n-1})$ is $n$.
\end{proposition}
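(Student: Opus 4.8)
The plan is to pin down the dimension by a two-sided estimate: since $\P_n(\I_{1,n-1}) \subset \RR^n$ we automatically have $\dim \P_n(\I_{1,n-1}) \le n$, so the entire content is the matching lower bound, which I would get by exhibiting $n+1$ affinely independent vertices.

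First I would use the $n$ length-$1$ interval vectors $e_1,\ldots,e_n$, which are vertices of $\P_n(\I_{1,n-1})$. Their affine hull is exactly the hyperplane $H = \{x \in \RR^n : \sum_{i=1}^n x_i = 1\}$, which is $(n-1)$-dimensional; since $n$ points spanning an $(n-1)$-dimensional affine space are necessarily affinely independent, this already gives $\dim \P_n(\I_{1,n-1}) \ge n-1$. To push the dimension up to $n$ I would adjoin one length-$(n-1)$ vertex, say $\alpha_{1,n-1} = e_1 + \cdots + e_{n-1}$. Its coordinate sum is $n-1$, and for $n \ge 3$ we have $n-1 \ge 2 \ne 1$, so $\alpha_{1,n-1} \notin H$. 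Hence $\{e_1,\ldots,e_n,\alpha_{1,n-1}\}$ is a set of $n+1$ affinely independent points inside $\P_n(\I_{1,n-1})$, forcing $\dim \P_n(\I_{1,n-1}) \ge n$, and combined with the trivial upper bound we conclude $\dim \P_n(\I_{1,n-1}) = n$.

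There is essentially no hard step in this route; the only point requiring care is the degenerate case $n = 2$, in which $n-1 = 1$ collapses the two interval lengths and the final vertex lands on $H$, so the argument (correctly) fails there. The statement should therefore be read for $n \ge 3$, which is consistent with the hypotheses of the theorems that follow.

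Alternatively, one can stay inside the paper's framework and invoke Dahl's dimension formula (Theorem~\ref{graphdimension}). Computing $\mathcal{E}_S = T^{-1}(\I_{1,n-1})$ shows that the flow-dimension graph of $\P_n(\I_{1,n-1})$ has edges $(k,k+1)$ for $k \in [n-1]$ together with $(1,n)$, i.e. it is an $n$-cycle, and that $V_1 = \{2,n\}$ is nonempty; thus the graph is a single connected component that meets $V_1$, giving $k_0 = 0$. Checking additionally that $0 \in \aff(\I_{1,n-1})$ (which holds because having two distinct interval lengths $1$ and $n-1$ lets an affine combination of the vertices attain coordinate sum $0$ once $n \ge 3$) then yields dimension $n - k_0 = n$. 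The only real obstacle along this second route is the bookkeeping of $\mathcal{E}_S$ and the verification that $0 \in \aff(\I_{1,n-1})$; the direct affine-independence argument above sidesteps both, so that is the approach I would present.
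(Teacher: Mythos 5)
Your proposal is correct and takes essentially the same route as the paper: the paper's proof also observes that the affine hull of $e_1,\dots,e_n$ is the $(n-1)$-dimensional hyperplane $\left\{ x \in \RR^n \mid x_1 + \dots + x_n = 1 \right\}$ and that $\alpha_{1,n-1}$ lies outside it, forcing the dimension up to $n$. Your remarks on the degenerate case $n=2$ and the alternative argument via Dahl's flow-dimension theorem are sensible additions, but the core argument is identical to the paper's.
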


\begin{proof}
The affine hull of $e_1, \dots, e_n$ is the $(n-1)$-dimensional set
\[
  \left\{ x \in \RR^n \, | \, x_1 + \dots + x_n = 1 \right\} . 
\]
Since $\alpha_{ 1,n-1 }$ is not in this set, $\dim(\P_{n}(\I_{1,n-1})) = n$. 
\commentout{
For $n\ge 3$, the vertices of $\P_{n}(\I_{1,n-1})$ form the set
$$\mathcal{I}=\left\{\begin{matrix}
e_1 &= (1, 0,\dots, 0, 0)\\
e_2 &= (0, 1,\dots, 0, 0)\\
\vdots\\
e_n &= (0, 0, \dots, 0, 1)\\
\alpha_{1,n-1} &= (1, 1,\dots, 1, 0)\\
\alpha_{2,n} &= (0, 1,\dots, 1, 1)
\end{matrix}\right\}.$$ We convert the interval vectors to the corresponding
elementary vector set \[\mathcal{E} = \{e_{1,2}, e_{2,3}, \ldots, e_{n-1,n},
e_{1,n}, e_2, e_n\}.\]  From this we construct the flow-dimension graph    
$G_{\P_{n}(\I_{1,n-1})} = (V,E)$ as seen in Figure 1, where $V = [n]$ and \[E =
\{(k,k+1)|k\in[n-1]\}\cup\{(1,n)\}\] corresponding to each $e_{i,j}$ in
$\mathcal{E}$.  The subset of vertices $V_1 = \{2, n\}$ (circled in Figure 1)
corresponds to each $e_i$ in $\mathcal{E}$.  Since the underlying graph is
connected, we know \[k_0=\;\#\{\textrm{connected components }C\text{ in
}G_{\P_{n}(\I_{1,n-1})}\text{ such that }C\cap V_1=\emptyset\} = 0.\]
Next we notice that
\[\frac{1}{n-2}e_1 + \frac{1}{n-2}e_2+\dots+\frac{1}{n-2}e_{n-1} -
\frac{1}{n-2}\alpha_{1,n-1} = \mathbf{0}\]
where the sum of the coefficients is
\[\frac{n-1}{n-2} - \frac{1}{n-1} = \frac{n-2}{n-2} = 1\]
So $\mathbf{0}\in\operatorname{aff}(\mathcal{I})$ and by Theorem
\ref{graphdimension}, $\operatorname{dim}(P_{n,1}) = n-k_0 = n$.
}
\end{proof}


Recall that the $f$-vector of a polytope tells us the number of faces the polytope
has of each dimension. Our next task is to compute the $f$-vector of
$\P_{n}(\I_{1,n-1})$.

\commentout{
with $n\ge 3$, is precisely the $n^{\textrm{th}}$ row of the Pascal 3-triangle
without 1's. The \emph{Pascal 3-triangle} is an analogue of Pascal's Triangle,
where the third row, instead of being 1 2 1, is replaced with 1 3 1, and then
the same addition pattern is followed as in Pascal's triangle.
}

\begin{lemma}\label{subdimension}
Let  $n\ge 3$.  Then
$\mathcal{B} := \operatorname{conv}(e_1, e_n, \alpha_{1,n-1},\alpha_{2,n})$ is a
2-dimensional face of $\P_{n}(\I_{1,n-1})$.
\end{lemma}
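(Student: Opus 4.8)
The plan is to produce an explicit supporting hyperplane whose intersection with $\P_n(\I_{1,n-1})$ is exactly $\mathcal{B}$, and then to check separately that $\mathcal{B}$ has dimension~$2$. First I would record that $\P_n(\I_{1,n-1})$ is the convex hull of the $n+2$ generators $e_1,\dots,e_n,\alpha_{1,n-1},\alpha_{2,n}$ (the interval vectors of length $1$ and length $n-1$). The key idea is to consider the linear functional $\ell(x)=x_1+x_n$. On every $(0,1)$-generator this is at most $1$: the four points $e_1$, $e_n$, $\alpha_{1,n-1}=(1,\dots,1,0)$, $\alpha_{2,n}=(0,1,\dots,1)$ each have exactly one of their first and last coordinates equal to $1$, so $\ell=1$ there, while each interior unit vector $e_2,\dots,e_{n-1}$ has $x_1=x_n=0$ and hence $\ell=0<1$. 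Thus $H:=\{x\mid\ell(x)=1\}$ is a supporting hyperplane of $\P_n(\I_{1,n-1})$.

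Next I would use the standard fact that the face cut out by $H$ is the convex hull of the generators lying on $H$. Concretely, any point of $\P_n(\I_{1,n-1})\cap H$ is a convex combination $\sum_s\lambda_s\,s$ of generators with $\sum_s\lambda_s\,\ell(s)=1$ and each $\ell(s)\le 1$; since $\sum_s\lambda_s=1$, this forces $\sum_s\lambda_s\bigl(1-\ell(s)\bigr)=0$ with every summand nonnegative, so $\lambda_s=0$ whenever $\ell(s)<1$. Therefore $\P_n(\I_{1,n-1})\cap H=\conv(e_1,e_n,\alpha_{1,n-1},\alpha_{2,n})=\mathcal{B}$, which shows $\mathcal{B}$ is a face.

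It remains to see that $\dim\mathcal{B}=2$. I would record the affine relation $e_1+\alpha_{2,n}=\alpha_{1,n-1}+e_n=(1,1,\dots,1)$, which exhibits the four points as the vertices of a parallelogram (and in particular shows they span at most a plane). To rule out degeneracy I would check that the two edge vectors based at $e_1$, namely $\alpha_{1,n-1}-e_1=(0,1,\dots,1,0)$ (ones in positions $2,\dots,n-1$) and $e_n-e_1=(-1,0,\dots,0,1)$, are linearly independent; this is exactly where the hypothesis $n\ge 3$ enters, since then position~$2$ is interior, so the first vector is nonzero there while the second vanishes. Hence the four points affinely span a plane and $\mathcal{B}$ is $2$-dimensional. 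The only genuinely creative step is guessing the functional $\ell(x)=x_1+x_n$; once it is written down, both the face property and the dimension count reduce to routine verifications, so I expect no serious obstacle.
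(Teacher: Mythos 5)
Your proof is correct, and it takes a genuinely different route from the paper's --- one that is in fact more complete. The paper never exhibits a supporting hyperplane: its proof is purely a dimension computation. It first applies Dahl's flow-dimension-graph theorem (Theorem \ref{graphdimension}) to $\mathcal{A}=\conv(e_n,\alpha_{1,n-1},\alpha_{2,n})$, whose elementary vectors are $\{e_{1,n},e_2,e_n\}$, giving a graph with $n-1$ components and $k_0=n-3$, and checks $\mathbf{0}\notin\aff(\mathcal{A})$ to conclude $\dim\mathcal{A}=n-(n-3)-1=2$; it then notes $e_1=\alpha_{1,n-1}-\alpha_{2,n}+e_n\in\aff(\mathcal{A})$, so adjoining $e_1$ adds no dimension and $\dim\mathcal{B}=2$. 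The assertion that $\mathcal{B}$ is actually a \emph{face} of $\P_n(\I_{1,n-1})$ is left implicit in the paper, whereas you establish it explicitly with the functional $\ell(x)=x_1+x_n$, which equals $1$ on the four generators of $\mathcal{B}$ and $0$ on $e_2,\dots,e_{n-1}$, together with the standard (and correctly reproduced) argument that the part of the polytope lying on a supporting hyperplane is the convex hull of the generators lying on it; this matches the paper's definition of a face exactly. Your dimension count is also more elementary: the parallelogram relation $e_1+\alpha_{2,n}=\alpha_{1,n-1}+e_n$ bounds the affine span by a plane, and the linear independence of $\alpha_{1,n-1}-e_1$ and $e_n-e_1$ (using $n\ge 3$) gives equality, with no appeal to Dahl's machinery. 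What the paper's approach buys is uniformity --- the flow-dimension graph is its standard tool for all dimension computations, e.g.\ for $\P_n(\I_i)$ in Section \ref{fixed} --- while yours buys self-containedness and an honest proof of the word ``face'' in the statement, which is what the later pyramid construction (Theorem \ref{fvector} and Theorem \ref{volpyramid}) actually relies on.
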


\begin{figure}[htb]
\label{Figure 2}
\begin{center}
\includegraphics[scale=.3]{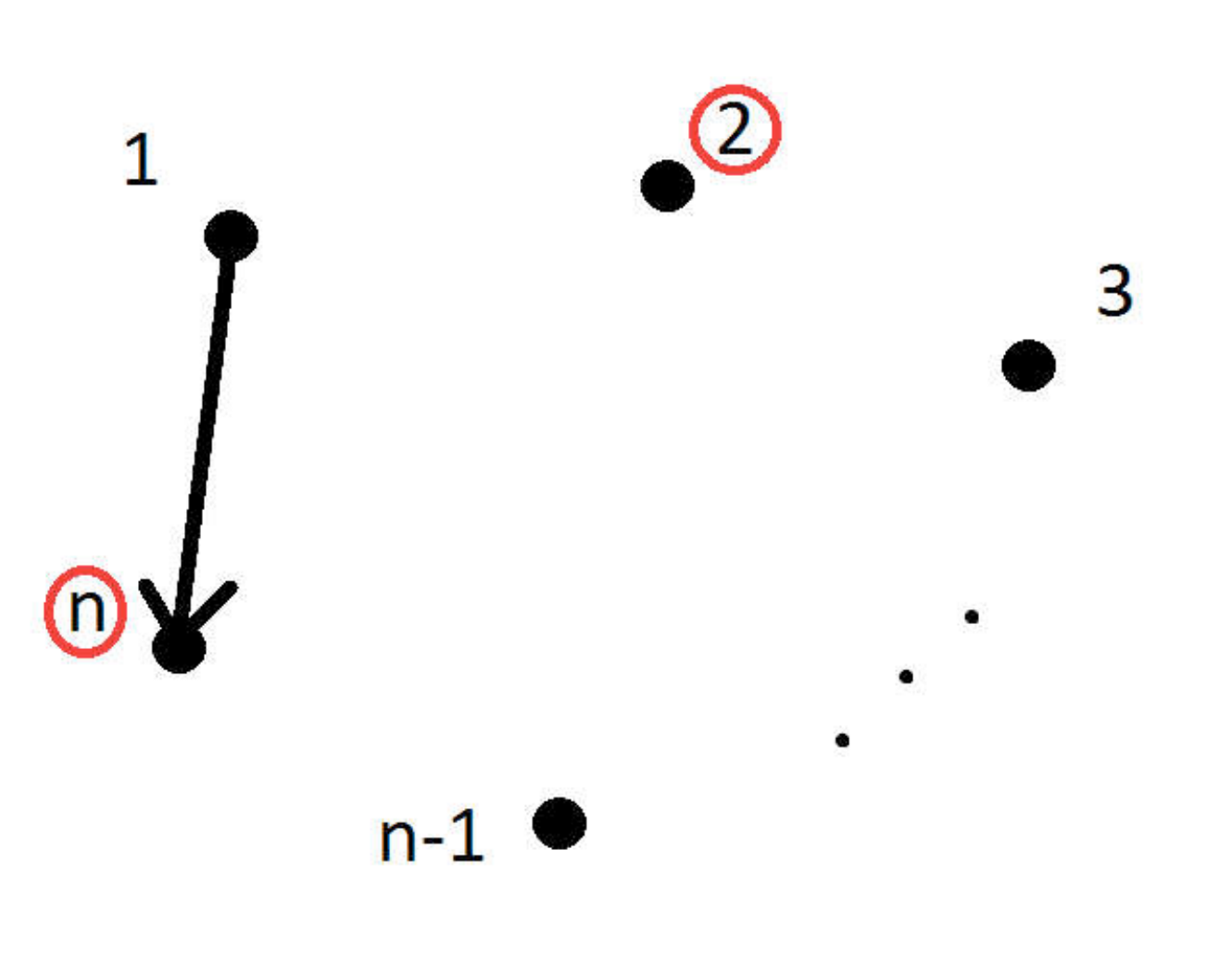}
\end{center}
\caption{$G_{\mathcal{A}}$.}
\end{figure}

\begin{proof}
We first consider
$\mathcal{A}=\operatorname{conv}(e_n,\alpha_{1,n-1},\alpha_{2,n})$.  The
corresponding elementary vectors of the vertex set are $\{e_{1,n}, e_2, e_n\}$. 
So we build the flow-dimension graph as seen in Figure 2, $G_{\mathcal{A}} =
(V,E)$ where $V = [n]$, $E = \{(1,n)\}$ corresponding to $e_{1,n}$. The subset
$V_1 = \{2, n\}$ (circled in Figure 2) corresponds to $e_2$ and $e_n$.  This
graph has $n-1$ connected components, two of which contain elements of $V_1$ so
that $k_0 = n-3$.

If we let $\lambda_1 e_n + \lambda_2 \alpha_{1,n-1} + \lambda_3 \alpha_{2,n} =
\mathbf{0}$, we first notice that $\lambda_2 = 0$ since $\alpha_{1,n-1}$ is the
only vector with a nonzero first coordinate.  But this implies that $\lambda_1 =
\lambda_3 = 0$.  Since the coefficients cannot sum to 1, we conclude that
$\mathbf{0}\notin\operatorname{aff}(e_n, \alpha_{1,n-1}, \alpha_{2,n})$.
So now by Theorem \ref{graphdimension},
\[\operatorname{dim}(\operatorname{conv}(e_n, \alpha_{1,n-1}, \alpha_{2,n})) = n
- k_0 - 1 = n-(n-3) - 1 = 2.\] Finally $e_1 = (1) \alpha_{1,n-1} + (-1)
\alpha_{2,n} + (1)e_n$ is in the affine hull of $\mathcal{A}$ and thus does not add a dimension.  We conclude that $\operatorname{dim}(\mathcal{B}) = 2$.
\end{proof}

\begin{corollary}\label{dimsum}
Let  $\mathcal{I}:=
\{e_1,e_2,\ldots,e_n,\alpha_{1,n-1},\alpha_{2,n}\}$ .  For $2\le
i\le n-1$  each $e_i$ adds a dimension to $\P_{n}(\I_{1,n-1})$, that is,
$e_i\notin\operatorname{aff}(\mathcal{I}\setminus\{e_i\})$.
\end{corollary}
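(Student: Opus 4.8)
The plan is to produce, for each fixed index $i$ with $2\le i\le n-1$, an explicit affine hyperplane that contains every vertex of $\mathcal{I}\setminus\{e_i\}$ but misses $e_i$. Since the affine hull of a set is contained in every hyperplane containing that set, such a hyperplane forces $e_i\notin\operatorname{aff}(\mathcal{I}\setminus\{e_i\})$ at once, which is exactly the claim.

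First I would determine the candidate functional by imposing the constraints one at a time. I seek $a\in\RR^n$ and a constant $c$ with $a\cdot v=c$ for all $v\in\mathcal{I}\setminus\{e_i\}$. The unit vectors $e_j$ with $j\ne i$ must all lie on the hyperplane, which forces $a_j=c$ for every $j\ne i$; normalizing, I take $c=1$ and $a_j=1$ for $j\ne i$, leaving only the entry $a_i$ to be determined. The decisive observation is that, because $2\le i\le n-1$, position $i$ lies in the support of \emph{both} long interval vectors, namely $\{1,\dots,n-1\}$ for $\alpha_{1,n-1}$ and $\{2,\dots,n\}$ for $\alpha_{2,n}$. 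Hence each of these two vectors contributes exactly $n-2$ coordinates of value $1$ together with the single entry $a_i$, and the requirement $a\cdot\alpha_{1,n-1}=a\cdot\alpha_{2,n}=1$ collapses to the one equation $(n-2)+a_i=1$, i.e.\ $a_i=3-n$.

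With $a=(1,\dots,1,\,3-n,\,1,\dots,1)$, the entry $3-n$ sitting in position $i$, I would verify the routine identities $a\cdot e_j=1$ for $j\ne i$ and $a\cdot\alpha_{1,n-1}=a\cdot\alpha_{2,n}=(n-2)+(3-n)=1$, so the hyperplane $\{x\in\RR^n\mid a\cdot x=1\}$ contains all of $\mathcal{I}\setminus\{e_i\}$. Finally $a\cdot e_i=3-n$, and $3-n\ne 1$ precisely when $n\ne 2$; since $n\ge 3$ we conclude $e_i\notin\operatorname{aff}(\mathcal{I}\setminus\{e_i\})$.

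The verification is elementary, so the only genuine work is guessing the right functional and recognizing why the range $2\le i\le n-1$ is indispensable. The whole construction hinges on position $i$ being common to the supports of both long interval vectors; for the extreme indices $i=1$ or $i=n$ this fails (position $1$ is outside the support of $\alpha_{2,n}$, and position $n$ is outside that of $\alpha_{1,n-1}$), which is exactly why $e_1$ and $e_n$ \emph{do} fall in the affine hull of the remaining vertices, consistent with the relation $e_1=\alpha_{1,n-1}-\alpha_{2,n}+e_n$ recorded in the proof of Lemma~\ref{subdimension}.
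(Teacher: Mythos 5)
Your proof is correct, and it takes a genuinely different route from the paper's. The paper treats this as an immediate corollary of a dimension count: Proposition \ref{dimension} gives $\dim(\P_{n}(\I_{1,n-1}))=n$, Lemma \ref{subdimension} gives $\dim(\mathcal{B})=2$ for the base $\mathcal{B}=\operatorname{conv}(e_1,e_n,\alpha_{1,n-1},\alpha_{2,n})$, and since each of the $n-2$ remaining vertices $e_2,\dots,e_{n-1}$ can raise the dimension of an affine hull by at most one, each must raise it by exactly one; no explicit certificate is produced. You instead exhibit, for each $i$ with $2\le i\le n-1$, the hyperplane $\left\{x\in\RR^n \mid a\cdot x=1\right\}$ with $a_j=1$ for $j\ne i$ and $a_i=3-n$, check that it contains every point of $\mathcal{I}\setminus\{e_i\}$ (this is where the hypothesis on $i$ enters, since position $i$ must lie in the supports of both $\alpha_{1,n-1}$ and $\alpha_{2,n}$, each contributing $(n-2)+a_i=1$), and observe $a\cdot e_i=3-n\ne 1$ for $n\ge 3$. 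Your argument is self-contained---it uses neither Proposition \ref{dimension} nor Lemma \ref{subdimension} nor the flow-dimension-graph machinery behind them---it provides a mechanically checkable witness, and it makes transparent exactly why the restriction $2\le i\le n-1$ is sharp, which you confirm via the affine relation $e_1=\alpha_{1,n-1}-\alpha_{2,n}+e_n$. What the paper's approach buys is brevity in context: once the two earlier results are established (and they are needed elsewhere anyway), the corollary is free; what yours buys is independence and explicitness.
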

\begin{proof}
This follows from Proposition \ref{dimension} and Lemma \ref{subdimension}.  Since
$\mathcal{B}$ has dimension $2$ and $\P_{n}(\I_{1,n-1})$ has dimension $n$, then
the $n-2$ remaining vertices must add the remaining $n-2$ dimensions. 
\end{proof}

\begin{lemma}\label{basefvector} Let $\mathcal{B}$ as in Lemma
\ref{subdimension}. Then $\mathcal{B}$ has $f$-vector $(1,4,4,1)$.
\end{lemma}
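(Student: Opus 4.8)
The plan is to leverage what Lemma~\ref{subdimension} already gives us: $\mathcal{B}$ is a $2$-dimensional polytope. A $2$-dimensional polytope is a polygon, so its $f$-vector has the shape $(1,f_0,f_1,1)$ with $f_{-1}=f_2=1$, and every polygon satisfies $f_0=f_1$ (each vertex lies on exactly two edges and conversely). Hence it suffices to prove that $\mathcal{B}$ has exactly four vertices, i.e., that each of the four defining points $e_1,e_n,\alpha_{1,n-1},\alpha_{2,n}$ is a vertex and so none of them is a convex combination of the other three. Once $f_0=4$ is established, $f_1=4$ follows for free and we get $f(\mathcal{B})=(1,4,4,1)$.

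To show all four points are vertices, I would exhibit, for each point, a linear functional on $\RR^n$ that attains its maximum over the four points uniquely at that point; such a functional cuts out a supporting hyperplane witnessing that the point is a vertex of $\mathcal{B}$. Writing a point as $(x_1,\ldots,x_n)$, the functional $x_1-x_2$ takes the values $1,0,0,-1$ at $e_1,e_n,\alpha_{1,n-1},\alpha_{2,n}$ respectively, so it is maximized uniquely at $e_1$; symmetrically $x_n-x_{n-1}$ takes values $0,1,-1,0$ and is maximized uniquely at $e_n$. For the two long vectors, $x_1+x_2$ takes values $1,0,2,1$ and is maximized uniquely at $\alpha_{1,n-1}$, while $x_{n-1}+x_n$ takes values $0,1,1,2$ and is maximized uniquely at $\alpha_{2,n}$. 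Each of these evaluations shows the corresponding point is extreme, so all four are vertices and $f_0=4$.

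There is essentially no hard step here; the proof is a direct verification. The only point requiring a moment of care is the smallest case $n=3$, where $\alpha_{1,n-1}=\alpha_{1,2}$ and $\alpha_{2,n}=\alpha_{2,3}$ share the middle coordinate, so one should confirm that the functionals above still separate the four points — a quick check shows the listed values remain correct for all $n\ge 3$. As an alternative one could instead verify directly that no point is a convex combination of the others; for $e_1$ this is already visible from the relation $e_1=\alpha_{1,n-1}-\alpha_{2,n}+e_n$ obtained in the proof of Lemma~\ref{subdimension}, whose coefficient $-1$ shows the combination is affine but not convex. I prefer the supporting-hyperplane argument, however, since it dispatches all four vertices uniformly and immediately yields $f(\mathcal{B})=(1,4,4,1)$.
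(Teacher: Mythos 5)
Your proof is correct, and it shares the paper's overall skeleton --- since $\mathcal{B}$ is $2$-dimensional it is a polygon, so $f_1=f_0$ and everything reduces to showing $f_0=4$ --- but your certification of the four vertices is genuinely different. The paper takes the vertexhood of $e_n,\alpha_{1,n-1},\alpha_{2,n}$ as known (from Lemma~\ref{subdimension}, where their convex hull $\mathcal{A}$ was shown to be $2$-dimensional, hence a triangle) and only checks that $e_1$ is not a convex combination of the other three, by forcing $\lambda_2=1$ from the first coordinate and deriving a contradiction. You instead exhibit, for each of the four points, a linear functional ($x_1-x_2$, $x_n-x_{n-1}$, $x_1+x_2$, $x_{n-1}+x_n$) maximized uniquely at that point; this supporting-hyperplane argument handles all four points uniformly and is in fact slightly more complete than the paper's, since being a vertex of the triangle $\mathcal{A}$ does not by itself guarantee being a vertex of the larger polygon $\mathcal{B}$ --- a check the paper leaves implicit. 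One small caveat on your closing aside: the relation $e_1=\alpha_{1,n-1}-\alpha_{2,n}+e_n$ having a negative coefficient rules out convexity only because the affine representation of $e_1$ in terms of the three affinely independent points is unique; as stated, a single non-convex affine combination would not in general preclude some other convex one. Since that aside is an alternative you explicitly set apart, not your main argument, it does not affect the proof's validity.
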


\begin{proof}
Since $\mathcal{B}$ has dimension 2, $f_1 = f_0$.  We know that
$\{e_n,\alpha_{1,n-1},\alpha_{2,n}\}$ are three vertices of $\mathcal{B}$.  If
$e_1\in\operatorname{conv}(e_n,\alpha_{1,n-1},\alpha_{2,n})$ then
\begin{equation}\label{e}e_1 = \lambda_1 e_n + \lambda_2 \alpha_{1,n-1} +
\lambda_3 \alpha_{2,n}\end{equation} where the coefficients sum to 1.  Since
$\alpha_{1,n-1}$ is the only vector with a nonzero coordinate in the first
position,  $\lambda_2 = 1.$ This in turn implies that $\lambda_1 =
\lambda_3 = 0$, contradicting (\ref{e}).  So
$e_1\notin\operatorname{conv}(e_n,\alpha_{1,n-1},\alpha_{2,n})$ and therefore
forms a fourth vertex. 
\end{proof}
We can tie all this together with the following theorem.  First we define a
\textit{d-pyramid} $P$ as the convex hull of a
$(d-1)$-dimensional polytope $K$ (the \textit{basis} of $P$) and a point
$A\notin\operatorname{aff}(K))$ (the \textit{apex} of $P$).
\begin{theorem}[see, e.g., \cite{MR1976856}]\label{facenumbers}
If $P$ is a $d$-pyramid with  basis $K$ then
\begin{eqnarray*}
f_0(P) &=& f_0(K) + 1\\
f_k(P) &=& f_k(K) + f_{k-1}(K)\hspace{10pt}\text{ for }1\le k\le
d-2\\
f_{d-1}(P) &=& 1 + f_{d-2}(K) \, .
\end{eqnarray*}
\end{theorem}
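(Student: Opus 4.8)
The plan is to establish a correspondence between the faces of the $d$-pyramid $P = \conv(K \cup \{A\})$ and the faces of its basis $K$, organized by whether or not a face contains the apex $A$. Concretely, I would show that every nonempty proper face $F$ of $P$ falls into exactly one of two types: either $F \subseteq K$, in which case $F$ is a face of $K$; or $A \in F$, in which case $F = \conv(G \cup \{A\})$ is the pyramid over a (possibly empty) face $G$ of $K$. Granting this dichotomy together with the dimension count below, the three displayed formulas all collapse into the single relation $f_k(P) = f_k(K) + f_{k-1}(K)$, interpreted with the conventions $f_{-1}(\cdot) = 1$ for the empty face and $f_{d-1}(K) = 1$ (since $\dim K = d-1$); the boundary cases $k=0$ and $k = d-1$ are then exactly what produces the $+1$ terms in the statement.

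The heart of the argument is the face dichotomy, which I would prove via supporting hyperplanes. Write any nonempty proper face as $F = P \cap H$ for a hyperplane $H = \{x : \langle c, x \rangle = b\}$ with $P$ contained in the halfspace $\{x : \langle c, x\rangle \le b\}$, and express each point of $P$ as $p = \lambda A + (1-\lambda) q$ with $q \in K$ and $\lambda \in [0,1]$. If $\langle c, A\rangle < b$, then $\langle c, p\rangle = b$ forces $\lambda = 0$, so $F \subseteq K$ and hence $F = K \cap H$ is a face of $K$. If instead $\langle c, A\rangle = b$, setting $G := K \cap H$ I would check both inclusions to get $F = \conv(G \cup \{A\})$: the inclusion $\supseteq$ is immediate from convexity, and for $\subseteq$ the equation $\langle c, p\rangle = b$ combined with $\langle c, A\rangle = b$ forces $\langle c, q\rangle = b$ whenever $\lambda < 1$, so that $q \in G$. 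The dimension bookkeeping is then clean: a face of $K$ sitting in the base keeps its dimension, while a pyramid $\conv(G \cup \{A\})$ has dimension $\dim G + 1$ because $A \notin \aff(K) \supseteq \aff(G)$, so a $(k-1)$-face of $K$ produces a $k$-face of $P$.

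To complete the bijection I would also verify the converse directions -- that each face of $K$ and each pyramid over a face of $K$ actually arises as a face of $P$ -- by exhibiting supporting hyperplanes (e.g.\ tilting a hyperplane supporting $G$ within $\aff(K)$ so that it also passes through $A$). Finally I would tally $k$-faces by type: those not containing $A$ number $f_k(K)$, and those containing $A$ number $f_{k-1}(K)$, yielding $f_k(P) = f_k(K) + f_{k-1}(K)$, and specializing $k = 0$ (where $f_{-1}(K) = 1$) and $k = d-1$ (where $f_{d-1}(K) = 1$) recovers the three formulas in the statement. I expect the main obstacle to be the converse direction of the face classification -- producing honest supporting hyperplanes that realize $\conv(G \cup \{A\})$ as a genuinely exposed face -- since the forward dichotomy is essentially forced by the convex-combination computation, whereas the converse requires a small perturbation argument.
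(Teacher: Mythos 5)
The paper does not actually prove this statement: it is quoted as a known result from the reference \cite{MR1976856}, so there is no in-paper argument to compare against. Your proposal is correct and is essentially the standard textbook argument for this fact: the supporting-hyperplane dichotomy (a proper face of $P$ either avoids the apex $A$, in which case it is a face of $K$, or contains $A$, in which case it is the pyramid $\conv(G\cup\{A\})$ over a face $G$ of $K$), the dimension shift $\dim\conv(G\cup\{A\})=\dim G+1$ coming from $A\notin\aff(K)$, and the unified count $f_k(P)=f_k(K)+f_{k-1}(K)$ with the conventions $f_{-1}(K)=1$ and $f_{d-1}(K)=1$, whose boundary cases $k=0$ and $k=d-1$ give the two formulas with the $+1$ terms. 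Two remarks on the converse direction, which you flag as the main obstacle: it is in fact easier than you anticipate, and no perturbation is needed. First, $K$ itself is a face of $P$, exposed by any hyperplane $H$ with $H\cap\aff(P)=\aff(K)$; writing points of $P$ as $\lambda A+(1-\lambda)q$ with $q\in K$ and taking an affine functional $\ell$ vanishing on $H$, one gets $\ell\bigl(\lambda A+(1-\lambda)q\bigr)=\lambda\,\ell(A)$, which has constant sign and vanishes exactly when $\lambda=0$. Since a face of a face is a face, every face of $K$ is then automatically a face of $P$. Second, for the pyramid over a proper face $G$ of $K$, take a flat $H'$ supporting $K$ inside $\aff(K)$ with $K\cap H'=G$ and let $H=\aff(H'\cup\{A\})$; an affine functional $\ell$ vanishing on $H$ satisfies $\ell\bigl(\lambda A+(1-\lambda)q\bigr)=(1-\lambda)\,\ell(q)$, so $P$ lies on one side of $H$ and $P\cap H=\conv(G\cup\{A\})$ exactly. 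With those two observations your outline closes up into a complete proof.
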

We notice that the rows of Pascal's 3-triangle act in the same manner and we
claim the face numbers for $\P_{n}(\I_{1,n-1})$ can be derived from Pascal's
3-triangle.

\begin{proof}[Proof of Theorem \ref{fvector}]
Recall that $\mathcal{I} = \{e_1,e_2,\ldots,e_n,\alpha_{1,n-1},\alpha_{2,n}\}$ is the
vertex set for $\P_{n}(\I_{1,n-1})$ with $n\ge 3$, and let $\mathcal{R}_{k} :=
\operatorname{conv}(\mathcal{I}\setminus\{e_{k},e_{k+1},\ldots,e_{n-1}\})$ for
$1\le k<n$.  Then it is clear that $\P_{n}(\I_{1,n-1})$ is the convex hull of the
union of the $(n-1)$-dimensional polytope $\mathcal{R}_{n-1}$ and
$e_{n-1}\notin\operatorname{aff}(\mathcal{R}_{n-1})$ (by Corollary
\ref{dimsum}), and thus is a pyramid and its face numbers can be computed as in
Theorem \ref{facenumbers} from the face numbers of $\mathcal{R}_{n-1}$.

Notice next that $\mathcal{R}_{n-1}$ is the convex hull of the
$(n-2)$-dimensional polytope $\mathcal{R}_{n-2}$ and
$e_{n-2}\notin\operatorname{aff}(\mathcal{R}_{n-2})$ (again by Corollary
\ref{dimsum}), so we can compute the face numbers of $\mathcal{R}_{n-1}$ from
those of $\mathcal{R}_{n-2}$ as in Theorem \ref{facenumbers}.

We can continue this process until we get that $\mathcal{R}_{3}$ is the convex
hull of $\mathcal{R}_2$ and $e_2\notin\operatorname{aff}(\mathcal{R}_2)$.  But
we notice that $\mathcal{R}_2 = \mathcal{B}$, so by Lemma \ref{basefvector},
$f_0(\mathcal{R}_2) = f_1(\mathcal{R}_2) = 4$.  From here we can build the 
$f$-vector of $\P_{n}(\I_{1,n-1})$ recursively, using Theorem \ref{facenumbers}.
\end{proof}


Our next goal is to compute the volume of $\P_{n}(\I_{1,n-1})$.
A simple induction proof gives:

\begin{lemma}\label{A2}
The determinant of the $n\times n$-matrix
\[\begin{bmatrix}0 & 1 & 1 & \cdots & 1\\1 & 0 & 1 & \cdots & 1\\ & & \ddots & &
\\1 & \cdots & 1 & 0 & 1\\1 & 1 & \cdots & 1 & 0\end{bmatrix}\] is
$(-1)^{n-1}(n-1)$.\end{lemma}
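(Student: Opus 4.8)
The plan is to prove the identity by induction on $n$, writing $D_n$ for the determinant in question. The base cases $n=1$ (the matrix is $[0]$, with determinant $0 = (-1)^0\cdot 0$) and $n=2$ (determinant $-1 = (-1)^1\cdot 1$) are immediate, so the heart of the argument is a recursion relating $D_n$ to $D_{n-1}$.

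To produce such a recursion, I would first apply the row operation of subtracting the last row from the first. Since the first row is $(0,1,\dots,1)$ and the last row is $(1,\dots,1,0)$, this replaces the first row by $(-1,0,\dots,0,1)$ without changing the determinant. Expanding the resulting determinant along this sparse first row leaves only two cofactors, coming from the entries $-1$ in position $(1,1)$ and $+1$ in position $(1,n)$. The first minor, obtained by deleting the first row and first column, is exactly the $(n-1)\times(n-1)$ matrix of the same shape, hence equals $D_{n-1}$; this is the step that makes the induction go.

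The second minor (delete row $1$, column $n$) is an $(n-1)\times(n-1)$ matrix whose last row is all $1$'s and whose remaining rows each have a single $0$; I would evaluate it separately---subtracting its all-ones row from the others turns it into a triangular-type matrix and shows its determinant is $1$. Collecting the cofactor signs then gives the recursion
\[
D_n = -D_{n-1} + (-1)^{\,n+1}.
\]
Feeding in the inductive hypothesis $D_{n-1} = (-1)^{n-2}(n-2)$ and using $(-1)^{n+1} = (-1)^{n-1}$, the right-hand side collapses to $(-1)^{n-1}(n-2) + (-1)^{n-1} = (-1)^{n-1}(n-1)$, completing the induction.

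The only delicate point is the careful bookkeeping of the cofactor signs together with the auxiliary evaluation of the second minor; everything else is routine. As a sanity check (and an alternative route that avoids induction entirely), one can observe that the matrix is $J-I$ with $J$ the all-ones matrix, whose eigenvalues $n$ and $0$ (the latter with multiplicity $n-1$) yield eigenvalues $n-1$ and $-1$ for $J-I$, so that $\det(J-I) = (n-1)(-1)^{n-1}$ at once.
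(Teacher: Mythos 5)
Your inductive argument is correct and is essentially the paper's own proof: both sparsify the first row by subtracting another row (you use the last row, the paper uses the second), expand along that row to obtain a recursion of the form $D_n = -D_{n-1} \pm 1$, and evaluate the auxiliary minor by row-reducing against its all-ones row. Your closing remark that the matrix is $J-I$, so that the eigenvalues $n-1$ and $-1$ (with multiplicity $n-1$) of $J-I$ give $\det(J-I)=(n-1)(-1)^{n-1}$ immediately, is a slicker route the paper does not take, but your main argument matches the paper's.
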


\commentout{
\begin{proof}
Let $A_{n}$ be the $n\times n$ matrix whose diagonal entries are $0$, and all
entries off the diagonal are $1$. E.g.,\\
$$A_{2}=\begin{bmatrix} 0 & 1 \\ 1 & 0\end{bmatrix}$$
and so $\operatorname{det}(A_{2})=-1$. Assume
$\operatorname{det}(A_{k})=(-1)^{k-1}(k-1)$. $A_{k+1}$ is the
$(k+1)\times(k+1)$-matrix of the form
$$A_{k+1}=\begin{bmatrix}0 & 1 & \cdots & 1 & 1\\1 & 0 & 1 & \cdots & 1\\ & &
\ddots & & \\1 & 1 & \cdots & 0 & 1\\1 & 1 & \cdots & 1 & 0\end{bmatrix}.$$
Subtracting the second row from the first, which does not change the value of
the determinant, will give us the matrix 
$$\begin{bmatrix}-1 & 1 & 0 & \cdots & 0\\1 & 0 & 1 & \cdots & 1\\ & & \ddots &
& \\1 & 1 & \cdots & 0 & 1\\1 & 1 & \cdots & 1 & 0\end{bmatrix}.$$
Now the determinant of $A_{k+1}$ is the sum of two determinants by cofactor
expansion. Specifically it is $(-1)\operatorname{det}(A_{k})$ minus the
determinant of the matrix obtained by taking out the first row and second
column. We know that $(-1)\operatorname{det}(A_{k})=(-1)^{k}(k-1)$ by the
inductive hypothesis. So what we have left to compute is the determinant of the
$(k\times k)$-matrix
$$\begin{bmatrix}1 & 1 & \cdots & 1 & 1\\1 & 0 & 1 & \cdots & 1\\ & & \ddots & &
\\1 & 1 & \cdots & 0 & 1\\1 & 1 & \cdots & 1 & 0\end{bmatrix}.$$
We will subtract the first row from each of the rows below it, also not changing
the determinant, to give us the upper triangular matrix
$$\begin{bmatrix}1 & 1 & \cdots & 1 & 1 & 1\\0 & -1 & 0 & \cdots & 0 & 0\\0 & 0
& -1 & 0 & \cdots & 0\\ & & & \ddots & & \\0 & 0 & \cdots & 0 & -1 & 0\\0 & 0 &
\cdots & 0 & 0 & -1\end{bmatrix}$$
whose determinant is $(-1)^{k-1}$. Furthermore,
\begin{align*}\operatorname{det}(A_{k+1})&=(-1)\operatorname{det}(A_{k})-(-1)^{
k-1}\\
&=(-1)^{k}(k-1)+(-1)^{k}\\
&=(-1)^{k}k.\end{align*} 
Therefore, by induction, $\operatorname{det}(A_n)=(-1)^{n-1}(n-1),$ for all
$n\in \mathbb{Z}_{\ge 2}$.\end{proof}
}

\begin{proof}[Proof of Theorem \ref{volpyramid}]
In order to calculate the volume of $\P_{n}(\I_{1,n-1})$ we will first
triangulate the $2$-dimensional base of the pyramid $\mathcal{B}$ from Lemma
\ref{subdimension}: namely, $\mathcal{B}$ is the union of  
\[\triangle_1=\operatorname{conv}(
e_{1},e_{n},\alpha_{1,n-1}) \qquad \text{ and } \qquad \triangle_2=\operatorname{conv}(
e_{n},\alpha_{1,n-1},\alpha_{2,n}).\] 

\commentout{
Let $x$ be a point in the base, then for
some $\lambda_{i}\ge 0,$ where $\displaystyle{\sum_{i=1}^{4}\lambda_{i}=1},$
\begin{align*}
x&=\lambda_{1}e_{1}+\lambda_{2}e_{n}+\lambda_{3}\alpha_{1,n-1}+\lambda_{4}
\alpha_{2,n}\\
&=(\lambda_{1}+\lambda_{3},\lambda_{3}+\lambda_{4},\cdots,\lambda_{3}+\lambda_{4
},\lambda_{2}+\lambda_{4})\\
&=(\lambda_{1}-\lambda_{4})e_{1}+(\lambda_{2}+\lambda_{4})e_{n}+(\lambda_{3}
+\lambda_{4})\alpha_{1,n-1}\\
&=(\lambda_{1}+\lambda_{2})e_{n}+(\lambda_{1}+\lambda_{3})\alpha_{1,n-1}
+(\lambda_{4}-\lambda_{1})\alpha_{2,n}.
\end{align*}
So $x$ is a point in $\triangle_1$ if $\lambda_{1}\ge\lambda_{4}$ and $x$ is a
point in $\triangle_2$ if $\lambda_{4}\ge \lambda_{1}$. Thus $\triangle_1$ and
$\triangle_2$ is a triangulation of the $2$-dimensional base of the pyramid.
}

By Corollary \ref{dimsum}, each $e_{2},\dots,e_{n-1}$ adds a dimension so that
the convex hull of these points and $\triangle_1$ is an $n$-dimensional simplex.
The same can be said of $\triangle_2$. Call these simplices $S_1$ and $S_2$
respectively; thus $S_1$ and $S_2$ triangulate $\P_{n}(\I_{1,n-1})$, and
the sum of their volumes is equal to the volume of $\P_{n}(\I_{1,n-1})$. In order
to calculate the volume of $S_1$ and $S_2$, we will use the Cayley Menger
determinant \cite{MR2141304}. Consider $S_1$, whose volume is the
determinant of the matrix
$$\begin{bmatrix}e_1-\alpha_{1,n-1} & e_2-\alpha_{1,n-1} & \cdots &
e_{n}-\alpha_{1,n-1}\end{bmatrix}=\begin{bmatrix}0 & -1 & -1 & \cdots & -1 &
-1\\-1 & 0 & -1 & \cdots & -1 & -1\\-1 & -1 & 0 & -1 & \cdots & -1\\ & &  &
\ddots & & \\-1 & -1 & \cdots & -1 & 0 & -1\\0 & 0 & 0 & \cdots & 0 &
1\end{bmatrix}.$$
Cofactor expansion on the last row will leave us with the determinant, up to a
sign, of the $(n-1)\times(n-1)$ matrix
\begin{equation}\label{A}\begin{bmatrix}0 & -1 & -1 & \cdots & -1\\-1 & 0 & -1 &
\cdots & -1\\ & & \ddots & & \\-1 & \cdots & -1 & 0 & -1\\-1 & -1 & \cdots & -1
& 0\end{bmatrix},\end{equation}
which, when ignoring sign, by Lemma \ref{A2} is $n-2$. Therefore the volume of
$S_1$ is $n-2$.

\commentout{
Now consider the Cayley Menger determinant of $S_2$, the determinant of
$$\begin{bmatrix}\alpha_{1,n-1}-\alpha_{2,n} & e_{2}-\alpha_{2,n} &
e_3-\alpha_{2,n} & \cdots & e_n-\alpha_{2,n}\end{bmatrix}=\begin{bmatrix}1 & 0 &
0 & 0 & \cdots & 0 \\0 & 0 & -1 & -1 & \cdots & -1\\0 & -1 & 0 & -1 & \dots &
-1\\ & &  & \ddots & & \\0 & -1 & -1 & \cdots & 0 & -1 \\-1 & -1 & -1 & \cdots &
-1 & 0\end{bmatrix}.$$
By cofactor expansion on the first row we are left with the positive determinant
of the matrix (\ref{A}) which is $n-2$. Therefore 
}

A similar computation gives the volume of $S_2$ as $n-2$,
and so the volume of $\P_{n}(\I_{1,n-1})$ is $2(n-2),$ as desired.
\end{proof}

\commentout{
Similar reasoning allows us to make the following conjectures:

If $\frac{n}{2}\ge 2$ the number of $k$-faces for $P_{n,2}$ is 
\[\begin{pmatrix}n-2\\k-1\end{pmatrix} + 2\begin{pmatrix}n-1\\k\end{pmatrix} +
\begin{pmatrix}n+1\\k+1\end{pmatrix}\]

If $\frac{n}{2}\ge 3$ the number of $k$-faces for $P_{n,3}$ is 
\[3\begin{pmatrix}n-2\\k-1\end{pmatrix} + 3\begin{pmatrix}n-1\\k\end{pmatrix} +
\begin{pmatrix}n+1\\k+1\end{pmatrix}\]

If $\frac{n}{2}\ge 4$ the number of $k$-faces for $P_{n,4}$ is 
\[\begin{pmatrix}n-3\\k-4\end{pmatrix} + 15\begin{pmatrix}n-3\\k-3\end{pmatrix}
+ 35\begin{pmatrix}n-3\\k-2\end{pmatrix} +
28\begin{pmatrix}n-3\\k-1\end{pmatrix} + 8\begin{pmatrix}n-3\\k\end{pmatrix} +
\begin{pmatrix}n-2\\k+1\end{pmatrix}\]
It is interesting to note that the $f-vectors$ for $P_{n,1}$ are all symmetric. 
Though this is not the case when $i>1$.

There are a few more interesting observations to be made.  It is easy to see
that the number of vertices of $\mathcal{P}_{n,i}$ is $2n - i + 1$ ($n$ unit
vectors and $n-i+1$ interval vectors with interval length $i$).  In every case
we checked we noticed that the number of faces of $P_{n,i}$ is the number of
vertices plus the $i^\text{th}$ triangle number, that is $2n - i + \frac{(i-1)(i)}{2}$. 
We conjecture that this is also the case in general.

Finally, we noticed that for $n = 2i$, the number of edges or $1$-faces of
$\mathcal{P}_{n,i}$ a shift of 9 times the $n^{\textrm{th}}$ triangle number,
specifically: $\frac{(i)(9i-1)}{2}$.

We hope to be able to generalize and prove these conjectures.
}


\section{The $i^\text{th}$ pyramidal interval vector polytope}\label{conjectures}

Recall that the $i^\text{th}$ pyramidal interval vector polytope is $\P_n(\I_{1,n-i})$, the convex hull of all interval vectors in $\RR^n$ with interval length $1$ or $n-i$. 

\begin{example}For $n=6$ and $i=2$,
\begin{align*}
  \P_6(\I_{1,4})
  &= \operatorname{conv}\big((1,0,0,0,0,0)\,,\,(0,1,0,0,
0,0,0)\,,\,(0,0,1,0,0,0)\,,\,(0,0,0,1,0,0)\,,\,(0,0,0,0,1,0)\,, \\
  &\qquad (0,0,0,0,0,1)\,,\,(1,1,1,1,0,0)\,,\,(0,1,1,1,1,0)\,,\,(0, 0,1,1,1,1)\big) \, .
\end{align*}
\end{example}

The following proposition collects certain properties of $\P_n(\I_{1,n-i})$. 
We omit its proof, since it is similar to the proofs in Section \ref{pyramid}.

\begin{proposition}
The dimension of $\P_n(\I_{1,n-i})$ is $n$.
Furthermore, $\P_n(\I_{1,n-i})$ can be constructed by taking iterative pyramids (with the
sequence of top vertices $e_{i+1},e_{i+2},\ldots,e_{n-i}$) over the
$2i$-dimensional base
\[\operatorname{conv}\left(\{e_1,e_2,\ldots,e_i,e_{n-i+1},\ldots,e_n
,\alpha_{1,n-i},\alpha_{2,n-i-1}\ldots,\alpha_{i+1,n}\}\right).\]
\end{proposition}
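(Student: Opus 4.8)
The plan is to follow the same three-stage template used in Section~\ref{pyramid} for the case $i=1$: first fix the dimension of the whole polytope, then compute the dimension of the proposed $2i$-dimensional base, and finally run a dimension-counting argument showing that the remaining unit vectors attach one pyramid at a time. Throughout I assume $n>2i$, so that the middle vertices $e_{i+1},\dots,e_{n-i}$ actually exist (when $n=2i$ the base is the whole polytope and there is nothing to iterate). For the dimension of $\P_n(\I_{1,n-i})$ I would argue exactly as in Proposition~\ref{dimension}: the unit vectors $e_1,\dots,e_n$ span the hyperplane $\{x\in\RR^n \mid x_1+\cdots+x_n=1\}$, which is $(n-1)$-dimensional, and the interval vector $\alpha_{1,n-i}$ has coordinate sum $n-i\neq 1$, hence lies off that hyperplane; adding it raises the dimension to the maximal value $n$.

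Next I would compute the dimension of the base $\mathcal{B}_i:=\operatorname{conv}(\{e_1,\dots,e_i,e_{n-i+1},\dots,e_n,\alpha_{1,n-i},\dots,\alpha_{i+1,n}\})$ via the flow-dimension graph method of Lemma~\ref{subdimension}. Applying $T^{-1}$ to the vertices produces the elementary vectors $e_k-e_{k+1}$ (for $1\le k\le i$ and for $n-i+1\le k\le n-1$), the long differences $e_k-e_{k+n-i}$ (for $1\le k\le i$, coming from $\alpha_{k,k+n-i-1}$), together with the pure vectors $e_n$ and $e_{i+1}$ (the latter from $\alpha_{i+1,n}$), so that $V_1=\{i+1,n\}$. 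The edges $(k,k+1)$ join $\{1,\dots,i+1\}$ and $\{n-i+1,\dots,n\}$ into two paths, and the long edges $(k,k+n-i)$ bridge them into a single component containing both elements of $V_1$; meanwhile each of the middle nodes $i+2,\dots,n-i$ is isolated and misses $V_1$. Thus $k_0=n-2i-1$.

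It then remains to verify that $0\notin\operatorname{aff}(\mathcal{B}_i)$, which is the step that separates dimension $2i$ from $2i+1$. Suppose $\sum_v\lambda_v v=0$ with $\sum_v\lambda_v=1$. Summing all coordinates gives $S_u+(n-i)S_a=0$, where $S_u$ and $S_a$ denote the total weight on the unit vectors and on the interval vectors; combined with $S_u+S_a=1$ this forces $S_a=-1/(n-i-1)\neq 0$. On the other hand, reading off the $(i+1)$st coordinate (where every interval vector carries a $1$ and, because $n>2i$, every base unit vector carries a $0$) gives $S_a=0$, a contradiction. By Theorem~\ref{graphdimension} I conclude $\dim\mathcal{B}_i=n-k_0-1=2i$.

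Finally I would assemble the pyramid structure as in Corollary~\ref{dimsum} and the proof of Theorem~\ref{fvector}. The polytope $\P_n(\I_{1,n-i})$ is obtained from $\mathcal{B}_i$ by successively taking convex hulls with the $n-2i$ middle unit vectors $e_{i+1},e_{i+2},\dots,e_{n-i}$. Each adjoined vertex raises the dimension by at most one, yet the dimension must climb from $2i$ to $n$ over exactly $n-2i$ steps; hence every step raises it by exactly one, so each $e_k$ lies off the affine hull of the previously assembled vertices and each step is a genuine pyramid over the running base. This yields the claimed iterated-pyramid description with base $\mathcal{B}_i$ and apex sequence $e_{i+1},\dots,e_{n-i}$. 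The main obstacle is the middle computation: correctly tallying the connected components of the base's flow-dimension graph and, above all, establishing $0\notin\operatorname{aff}(\mathcal{B}_i)$, since it is this affine-independence check rather than the graph bookkeeping that determines the exact dimension of the base.
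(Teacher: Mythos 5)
Your proof is correct, and it is essentially the argument the paper intends: the authors omit their proof on the grounds that it is ``similar to the proofs in Section~\ref{pyramid},'' and your three stages are exactly the generalizations of Proposition~\ref{dimension} (the unit vectors span the hyperplane $x_1+\cdots+x_n=1$ and one interval vector lies off it), Lemma~\ref{subdimension} (flow-dimension graph of the base plus a check that $0\notin\aff$, then Theorem~\ref{graphdimension}), and Corollary~\ref{dimsum} together with the pyramid assembly in the proof of Theorem~\ref{fvector}. The two computations that carry the weight both check out: the base's graph has the single component $\{1,\ldots,i+1\}\cup\{n-i+1,\ldots,n\}$ containing $V_1=\{i+1,n\}$ and exactly $n-2i-1$ isolated middle nodes, so $k_0=n-2i-1$; and your two equations (the total coordinate sum forcing $S_a=-1/(n-i-1)\neq 0$, versus the $(i+1)$st coordinate forcing $S_a=0$) are a genuine contradiction because $n-i-1\geq i\geq 1$ under your standing hypothesis $n>2i$.

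One comparison worth recording: the authors' own draft of this proof (present in the source but suppressed) runs the deduction in the opposite order. They get $\dim \P_n(\I_{1,n-i})=n$ from Dahl's theorem applied to the whole polytope, then prove the \emph{stronger} lemma that each $e_k$ with $i<k\le n-i$ lies outside the affine hull of \emph{all} the remaining vertices (a direct coordinate-summing contradiction using $i\le n/2$), from which the pyramid structure is immediate and the base dimension $2i$ follows by subtraction. You instead compute the base dimension directly and recover the affine independence of the apexes by pigeonhole. Your order is cleaner, since the counting step is free while their middle-vertex lemma is the messiest computation in the argument; theirs proves slightly more, namely that the apexes are independent of everything, so the pyramids could be erected in any order. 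Either way the proposition follows; you should just state explicitly that $n>2i$ (equivalently, that the apex list is nonempty) is assumed throughout, since all three of your stages use it and the proposition as printed does not say so.
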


\commentout{
\begin{proof} Let
\[\mathcal{I}=\{e_1,e_2,\ldots,e_n,\alpha_{1,n-i},\alpha_{2,n-i+1},\ldots,
\alpha_{i+1,n}\}.\] Then $\mathcal{P}_{n,i}=\operatorname{conv}(\mathcal{I})$.
The interval vectors of $\mathcal{I}$ correspond to
\[\mathcal{E}=\{e_{1,2},e_{2,3},\ldots,e_{n-1,n},e_n,e_{1,n-i+1},e_{2,n-i+2},
\ldots,e_{i,n},e_{i+1}\}.\] Now let $G_1=(V,E_1)$ and $G_2=(V,E_2)$ where
\begin{align*}V&=[n]\\ E_1&=\{(j,j+1)\,|\,j<n\}\\ E_2&=\{(j,j+n-i)\,|\,j\le
i\}.\end{align*} Then the flow-dimension graph $G_{\mathcal{P}_{n,i}}=G_1\cup
G_2$. Since $e_n,e_{i+1}\in\mathcal{E}$, then $V_1=\{n,i+1\}$. Notice that
$G_{\mathcal{P}_{n,i}}$ is connected, so the number of connected components that
don't intersect $V_1$ is $k_0=0$.\\
Notice that $(1)\alpha_{1,n-i}+(-1)\alpha_{2,n-i+1}+(1)e_{n-i}=\mathbf{0}$
resulting in $\mathbf{0}\in\operatorname{aff}(\mathcal{P}_{n,i})$. Therefore by
Theorem \ref{graphdimension} of Dahl
\[\operatorname{dim}(\mathcal{P}_{n,i})=n-k_0=n.\]
\end{proof}

\begin{lemma}
For any $k$ such that $i<k\le n-i$,
$e_k\not\in\operatorname{aff}\left(\mathcal{P}_{n,i}\setminus\{e_k\}\right)$.
\end{lemma}

\begin{proof}
Assume $e_k\in\operatorname{aff}\left(\mathcal{P}_{n,i}\setminus\{e_k\}\right)$
where $i<k\le n-i$. Then there exist
$\{x_1,x_2,\ldots,x_n,y_1,y_2,\ldots,y_{i+1}\}\setminus\{x_k\}\subset\mathbb{R}$
such that
\[x_1e_1+x_2e_2+\cdots+x_{k-1}e_{k-1}+x_{k+1}e_{k+1}+\cdots+x_ne_n+y_1\alpha_{1,
n-i}+y_2\alpha_{2,n-i+1}+\cdots+y_{i+1}\alpha_{i+1,n}=e_k.\] Then since all but
the $k^{\textrm{th}}$ coordinate sums to 0, for each $j\in[n]\setminus\{k\}$
\begin{equation}\label{ekaff}x_j+\sum_{\{m\in[i+1]\,|\,m\le j\le
n-i-m+1\}}y_m=0.\end{equation}
Each $\alpha_{a,n-i-a+1}$ has a 1 in $n-i$ positions, notably every
$\alpha_{a,n-i-a+1}$ has a 1 in the $k^{\textrm{th}}$ position. Thus
(\ref{ekaff}) yields $n-i-1$ appearances of $y_1,y_2,\ldots,y_{i+1}$ and since
each $\alpha_{a,n-i-a+1}$ has a 1 in the $k^{\textrm{th}}$ position
\[y_1+y_2+\cdots+y_{i+1}=1.\]
Taking the sum of each of the $n-1$ equations from (\ref{ekaff}) we have
\begin{align*}
x_1+\cdots+x_{k-1}+x_{k+1}+\cdots+x_n+(n-i-1)(y_1+\cdots+y_{i+1})&=0\\
x_1+\cdots+x_{k-1}+x_{k+1}+\cdots+x_n+(y_1+\cdots+y_{i+1}
)&=-(n-i-2)(y_1+y_2+\cdots+y_{i+1})\\
x_1+\cdots+x_{k-1}+x_{k+1}+\cdots+x_n+(y_1+\cdots+y_{i+1})&=-(n-i-2)(1)\\
x_1+\cdots+x_{k-1}+x_{k+1}+\cdots+x_n+(y_1+\cdots+y_{i+1})&=i+2-n\end{align*}
Since $e_k\in\operatorname{aff}\left(\mathcal{P}_{n,i}\setminus\{e_k\}\right),$
the sum of the coefficients is 1. Also $n>2$ and $i\le\frac{n}{2}$ so we have
\begin{align*}
x_1+x_2+\cdots+x_{k-1}+x_{k+1}+\cdots+x_n+(y_1+y_2+\cdots+y_{i+1})&=i+2-n\\
&\le \frac{n}{2}-n+2\\
&=2-\frac{n}{2}\\
&< 2-1\\
&=1
\end{align*}
The sum of the coefficients, $i+2-n\not=1$ and here lies the contradiction.
Therefore $e_k\not\in\operatorname{aff}(\mathcal{P}_{n,i}\setminus\{e_k\})$.
\end{proof}

\begin{corollary} Let
\[\mathcal{B}=\operatorname{conv}\left(\{e_1,e_2,\ldots,e_i,e_{n-i+1},\ldots,e_n
,\alpha_{1,n-i},\alpha_{2,n-i-1}\ldots,\alpha_{i+1,n}\}\right).\] Then adding
each vector in $\{e_{i+1},e_{i+2},\ldots,e_{n-i}\}$ sequentially pyramids over
the previous base.\end{corollary}
\begin{corollary}$\operatorname{dim}(\mathcal{B})=2i$.\end{corollary}
}

Using {\tt polymake} to generate $f$-vectors for varying $n$, we observed 
that the $f$-vectors of $\P_n(\I_{1,n-i})$
correspond to the sum of multiple shifted Pascal triangles; this is again due to its pyramid
property. 
We also offer the following:

\begin{conj}
The volume of $\P_n(\I_{1,n-i})$ equals $2^{i}(n-(i+1))$.
\end{conj}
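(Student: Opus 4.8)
The plan is to imitate the proof of Theorem \ref{volpyramid}, but to carry out the base triangulation in the larger base of the iterated pyramid. By the preceding Proposition, $\P_n(\I_{1,n-i})$ is built from the $2i$-dimensional base
\[
\mathcal{B}=\conv\bigl(\{e_1,\ldots,e_i,\,e_{n-i+1},\ldots,e_n,\,\alpha_{1,n-i},\alpha_{2,n-i+1},\ldots,\alpha_{i+1,n}\}\bigr)
\]
by successively pyramiding over the ``middle'' unit vectors $e_{i+1},\ldots,e_{n-i}$. First I would fix a triangulation of $\mathcal{B}$ into simplices $\sigma$ and, for each, form the $n$-dimensional simplex $S_\sigma=\conv\bigl(\sigma\cup\{e_{i+1},\ldots,e_{n-i}\}\bigr)$. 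Since the apex set is common to all $S_\sigma$ and $\dim\mathcal{B}+(n-2i)=n$, the $S_\sigma$ triangulate $\P_n(\I_{1,n-i})$, so that $\vol(\P_n(\I_{1,n-i}))=\sum_\sigma\vol(S_\sigma)$ with all summands positive.

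The combinatorial heart is the triangulation of $\mathcal{B}$. The $i$ identities
\[
\alpha_{k+1,\,n-i+k}+e_k=\alpha_{k,\,n-i+k-1}+e_{n-i+k}\qquad(1\le k\le i)
\]
are, I claim, a basis of the affine dependences among the vertices of $\mathcal{B}$ (there are $3i+1$ vertices spanning dimension $2i$, hence an $i$-dimensional relation space, and each $e_k$ occurs in exactly one identity, forcing independence), and each is a circuit on four vertices. Moreover $\mathcal{B}$ is the convex hull of two parallel slices: the standard $(2i-1)$-simplex on the unit vectors $e_1,\ldots,e_i,e_{n-i+1},\ldots,e_n$ (all middle coordinates $0$) and the $i$-simplex $\conv(\alpha_{1,n-i},\ldots,\alpha_{i+1,n})$ (all middle coordinates $1$). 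I expect that resolving the $i$ circuits independently triangulates this Cayley-type polytope into exactly $2^i$ simplices; establishing that these are precisely the maximal cells of a triangulation (say a pulling or a regular one) and counting them is the first main obstacle.

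For the volume of a single $S_\sigma$ I would use the Cayley--Menger determinant \cite{MR2141304} together with a block structure. Choosing a unit-vector vertex $v_0$ of $\sigma$ as base point and splitting the coordinates of $\RR^n$ into the $2i$ ``end'' coordinates $\{1,\ldots,i,n-i+1,\ldots,n\}$ and the $n-2i$ ``middle'' coordinates, every $\alpha$ has all middle coordinates equal to $1$ while every unit vector of $\mathcal{B}$ has them $0$; the apex edges $e_m-v_0$ therefore contribute the $(n-2i)\times(n-2i)$ identity in the middle-by-apex block. Taking the Schur complement collapses the $n\times n$ edge determinant to the $2i\times2i$ determinant $\det\bigl(P_\sigma+(n-2i)\,c_\sigma\mu_\sigma^{\mathsf T}\bigr)$, a rank-one update of a fixed matrix $P_\sigma$. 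By the matrix determinant lemma $\det\bigl(P_\sigma+(n-2i)\,c_\sigma\mu_\sigma^{\mathsf T}\bigr)=\det P_\sigma+(n-2i)\,\mu_\sigma^{\mathsf T}\operatorname{adj}(P_\sigma)\,c_\sigma$, which is affine in $n$ (already forcing the volume to be linear in $n$, as in Theorem \ref{volpyramid}). The remaining task is to show this equals $\pm(n-i-1)$ for every $\sigma$: this is the determinant computation generalizing Lemma \ref{A2} (the $i=1$ case is that lemma, where $P_\sigma$ is singular and the rank-one term alone produces $n-2$), and I anticipate $\det P_\sigma=\pm(i-1)$ with rank-one coefficient $\pm1$, so that each $\vol(S_\sigma)=n-i-1$.

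Summing over the $2^i$ cells then gives $\vol(\P_n(\I_{1,n-i}))=2^i(n-i-1)$; the factor $2^i$ is thus the normalized volume of the base $\mathcal{B}$, matching the unit square obtained for $i=1$. The two delicate points are the exact count of $2^i$ simplices in the base triangulation and the uniform evaluation of the reduced determinant as $\pm(n-i-1)$; the sign bookkeeping in the matrix determinant lemma and the treatment of the singular matrices $P_\sigma$ (which already occur at $i=1$) are the routine details I expect to fill most of the write-up.
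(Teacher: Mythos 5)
This statement is a conjecture in the paper; the authors give no proof, and in fact the refinement your plan rests on --- that $\P_n(\I_{1,n-i})$ triangulates into $2^i$ simplices, each pyramid contributing volume $n-(i+1)$ --- is precisely the ``more concrete'' conjecture the authors state immediately afterward and also leave open. Your proposal does not close this gap: it is a strategy outline whose two load-bearing steps are explicitly deferred. First, the claim that ``resolving the $i$ circuits independently'' triangulates the base $\mathcal{B}$ into exactly $2^i$ maximal cells is asserted, not proved; note that the circuits are not vertex-disjoint (for example $\alpha_{2,n-i+1}$ lies in both the $k=1$ and $k=2$ circuits), so $\mathcal{B}$ is not a join or product of quadrilaterals and the ``independent resolution'' picture does not follow formally from having a basis of the dependence space. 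One would need to exhibit an explicit (say regular or pulling) triangulation, verify its maximal cells, and count them. Second, the determinant evaluation that each $n$-simplex $S_\sigma$ has normalized volume $n-i-1$ is only ``anticipated'': the matrices $P_\sigma$ and vectors $c_\sigma,\mu_\sigma$ in your Schur-complement reduction are never written down, and the asserted values $\det P_\sigma=\pm(i-1)$ and rank-one coefficient $\pm 1$ are unverified; this computation is the entire quantitative content of the conjecture, generalizing Lemma \ref{A2}.

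That said, the skeleton is sound and consistent with what is known: the affine relations $\alpha_{k+1,n-i+k}+e_k=\alpha_{k,n-i+k-1}+e_{n-i+k}$ are correct and linearly independent (each $e_k$, $k\le i$, appears in exactly one), the cone-over-a-base-triangulation argument does reduce the problem to the base $\mathcal{B}$, and the formula specializes correctly to Theorem \ref{volpyramid} at $i=1$. But as it stands you have reformulated the conjecture, not proved it: a referee would ask you to (a) produce the triangulation of $\mathcal{B}$ with an explicit description of its $2^i$ cells and a proof that they cover $\mathcal{B}$ with disjoint interiors, and (b) carry out the volume determinant for an arbitrary cell, including the singular cases you flag. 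Also be careful with the closing remark that ``$2^i$ is the normalized volume of the base'': the volume of $S_\sigma$ is not the product of the volume of $\sigma$ with a pyramid factor in the normalized lattice sense, as the $i=1$ case already shows (the full simplices have volume $n-2$ while the base triangles are lattice triangles in a different lattice), so that heuristic should not be leaned on in the write-up.
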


We conjecture something more concrete: 
namely, that $\P_n(\I_{1,n-i})$
can be triangulated into $2^{i}$ simplices, and pyramiding over each of these
simplices each yields a volume of $n-(i+1)$.


\bibliographystyle{amsplain}  
\bibliography{MyBibliography}  

\providecommand{\bysame}{\leavevmode\hbox to3em{\hrulefill}\thinspace}
\providecommand{\MR}{\relax\ifhmode\unskip\space\fi MR }
\providecommand{\MRhref}[2]{%
  \href{http://www.ams.org/mathscinet-getitem?mr=#1}{#2}
}
\providecommand{\href}[2]{#2}
\begin{thebibliography}{10}

\bibitem{ccd}
Matthias Beck and Sinai Robins, \emph{Computing the continuous discretely:
  Integer-point enumeration in polyhedra}, Undergraduate Texts in Mathematics,
  Springer, New York, 2007, Electronically available at {\tt
  http://math.sfsu.edu/beck/ccd.html}.

\bibitem{MR2825295}
Geir Dahl, \emph{Polytopes related to interval vectors and incidence matrices},
  Linear Algebra Appl. \textbf{435} (2011), no.~11, 2955--2960, Correction in:
  Linear Algebra Appl. {\bf 438} (2013), no.~1, 1--6.

\bibitem{MR2141304}
Carlos D'Andrea and Mart\'in Sombra, \emph{The {C}ayley--{M}enger determinant
  is irreducible for {$n\geq 3$}}, Sibirsk. Mat. Zh. \textbf{46} (2005), no.~1,
  90--97.

\bibitem{latte}
Jes{\'u}s~A. De~Loera, Raymond Hemmecke, Jeremiah Tauzer, and Ruriko Yoshida,
  \emph{Effective lattice point counting in rational convex polytopes}, J.
  Symbolic Comput. \textbf{38} (2004), no.~4, 1273--1302.

\bibitem{ehrhartpolynomial}
Eug{\`e}ne Ehrhart, \emph{Sur les poly\`edres rationnels homoth\'etiques \`a
  {$n$}\ dimensions}, C. R. Acad. Sci. Paris \textbf{254} (1962), 616--618.

\bibitem{polymake}
Ewgenij Gawrilow and Michael Joswig, \emph{polymake: a framework for analyzing
  convex polytopes}, Polytopes---combinatorics and computation (Oberwolfach,
  1997), DMV Sem., vol.~29, Birkh\"auser, Basel, 2000, pp.~43--73. Software
  {\tt polymake} available at {\tt http://www.polymake.org}.

\bibitem{MR1976856}
Branko Gr{\"u}nbaum, \emph{Convex polytopes}, second ed., Graduate Texts in
  Mathematics, vol. 221, Springer-Verlag, New York, 2003, Prepared and with a
  preface by Volker Kaibel, Victor Klee and G{\"u}nter M. Ziegler.

\bibitem{koeppelatte}
Matthias K{\"o}ppe, \emph{A primal {B}arvinok algorithm based on irrational
  decompositions}, SIAM J. Discrete Math. \textbf{21} (2007), no.~1, 220--236
  (electronic), {\tt arXiv:math.CO/0603308}. Software {\tt LattE macchiato}
  available at {\tt http://www.math.ucdavis.edu/$\sim$mkoeppe/latte/}.

\bibitem{MR2487491}
Alexander Postnikov, \emph{Permutohedra, associahedra, and beyond}, Int. Math.
  Res. Not. (2009), no.~6, 1026--1106.

\bibitem{sloaneonlineseq}
Neil J.~A. Sloane, \emph{On-line encyclopedia of integer sequences}, {\tt
  http://oeis.org/}.

\end{thebibliography}

\end{document}